\documentclass[11pt]{article}
\usepackage{amsmath,amsthm,amsfonts,amssymb,amscd, amsxtra,color}
\usepackage{graphicx,multirow}
\usepackage{float}
\usepackage{xcolor}
\usepackage[active]{srcltx}
\usepackage{url}
\usepackage{cite}
\usepackage[margin=1.0in]{geometry}
\newtheorem{theorem}{Theorem}[section]

\newtheorem{lemma}{Lemma}[section]
\newtheorem{proposition}{Proposition}[section]
\newtheorem{remark}{Remark}[section]

\newtheorem{definition}{Definition}[section]


\def\dom{\operatorname{dom}}

\def\argmin{\operatorname{argmin}}

\def\min{\operatorname{min}}

\def\min{\operatorname{Minimize}}

\title{An extragradient-type algorithm for variational inequality\\  on Hadamard manifolds}
\author{Batista,  E. E. A.\thanks{CCET, Universidade Federal do Oeste da Bahia,
Barreiras, BA 47808-021, BR ({\tt edvaldo.batista@ufob.edu.br}).}
\and Bento,  G. C.\thanks{IME, Universidade Federal de Goi\'as,
Goi\^ania, GO 74001-970, BR ({\tt glaydston@ufg.br}).}
\and
Ferreira, O. P.
\thanks{IME, Universidade Federal de Goi\'as,
Goi\^ania, GO 74001-970, BR({\tt orizon@ufg.br}).}
}
\begin{document}
\maketitle
\begin{abstract}
The aim of this paper is to  present an extragradient method  for variational inequality  associated to a point-to-set vector field  in Hadamard manifolds and to study its convergence properties.  In order to present our method  the concept of $\epsilon$-enlargement  of maximal monotone vector fields  is used   and its  lower-semicontinuity  is stablished in order to obtain   the convergence of the method in this new context.\\

\noindent
{\bf Keywords:} Extragradient algorithm; Hadamard manifolds; $\epsilon$-enlargement; lower-semicontinuity.

\end{abstract}
\section{Introduction}

In its  classical formulation,  the variational inequality problem is an inequality involving a operator, which has to be solved for all possible values of a given variable belonging  to a convex set in a  liner space.   As well known  the variational inequality problem is an abstract model for various problems in classical analysis and its applications, where the  convexity and linear structure plays an important role.  However, in many practical applications  the natural structure of the data are modeled as constrained optimization problems, where the constraints  are  non-linear and non-convex, more specially, the constraints are  Riemannian manifolds; see \cite{AdlerDedieuShub2002, BacakBergmannSteidl2016, DasChakrabortiChaudhuri2001, DreznerWesolowsky1983, BergmannPerschSteidl2016, BergmannWeinmann2016, BhattacharyaPatrangenaru2003, BhattacharyaPatrangenaru2005, BhattacharyaBhattacharya2008, Fletcher2013, Pennec2006, Freifeld2012, BergmannWeinmann2016_2, kleinsteuber2012, HaweKleinsteuberDiepold2013, Smith1994}. Due to  such applications, interest in the development of optimization tools as well as mathematical programming methods to Riemannian settings has increased significantly;  papers published on  this topic  include, but are not limited to,   \cite{WangLiWangYao2015, SouzaOliveira2015, BentoCruzNetoOliveira2016, ChenHuang2016, SuparatulatornCholamjiak2017, WangLiLopez2016, RafaNicolae2016,  AhmadiKhatibzadeh2014, Bacak2013, BentoFerreira2015,  TangHuang2013, WangLiLopezYao2015, CruzNetoSantosSoares2016, GrohsHosseini2016-2}.

In this paper, we consider the problem of  finding a  solution of a variational inequality problem on Riemannian context, this problem   were first introduced and studied  by  N\'emeth in \cite{Nemeth2003},  for univalued vector fields on Hadamard manifolds,   and  for multivalued  vector fields on general Riemannian manifolds  by Li and Yao in   \cite{LiYao2012};  for recent works addressing this subject  see \cite{FangChen2015, LiChongLiouYao2009, TangWangLiu2015, TangZhouHuang2013}. It is worth noting that  constrained optimization problems  and the problem of finding the zero of a  multivalued vector field,  which were  studied  in \cite{AhmadiKhatibzadeh2014, BentoFerreira2015, daCruzFerreiraPerez2006,  FerreiraOliveira2002, LiLopesMartin-Marquez2009, WangLiLopezYao2015},  are  particular instances of the variational inequality.

The aim of this paper is to  present an extragradient-type algorithm for variational inequality  associated to a point-to-set vector filed  in Hadamard manifolds and to study its convergence properties. In order to present our method we utilize the concept of $\epsilon$-enlargement   introduced  by \cite{BurachikIusemSvaiter1997} in Euclidean spaces and generalized by \cite{BatistaBentoFerreira2015-2} to the Riemannian context.  It is worth mentioning that the concept of $\epsilon$-enlargement   in linear spaces has been successfully employed for a wide range of purposes; see \cite{BurachikIusem2008} and its reference therein. In particular,   the $\epsilon$-enlargement was used  to establish the iteration complexity of the  hybrid proximal extragradient method, see \cite{MonteiroSvaiter2010}.  The convergence analysis of the  extragradient algorithm  in linear setting associated to a point-to-point operator  cannot be automatically extended to point-to-set operator. As was remarked in \cite{IusemPerez2000}, the reason for this failure is the lack of lower-semicontinuity of arbitrary maximal monotone operator.  In \cite{IusemPerez2000} was showed that  the $\epsilon$-enlargement of  arbitrary   maximal monotone operators are lower-semicontinuity and then  extended the extragradient algorithm to point-to-set operator.   In this paper we show that lower-semicontinuity remains valid for the   $\epsilon$-enlargement of arbitrary maximal monotone vector fields  and this allowed us to prove the convergence of the extragradient algorithm for variational inequalities associated to a point-to-set vector filed  in Hadamard manifolds.  Finally, we  state that  the proposed method   has two  important particular instances, namely, the methods (3.1) of \cite{IusemPerez2000} and (4.1) of \cite{TangHuang2012}.

The remainder of this paper is organized as follows. In Section~\ref{sec2},  some notations and  basic results used in the paper are presented. In Section~\ref{sec3}, the  concept of enlargement of monotone vector fields is introduced and  some properties are obtained. In Section \ref{sec4}, an extragradient algorithm for variational inequalities is presented  and its convergence properties are studied. Finally, the conclusions are presented in Section~\ref{sec5}.
\subsection{Notation and terminology}
\label{sec2}

In this section, we introduce some fundamental  properties and notations about Riemannian geometry. These basics facts can be found in any introductory book on Riemannian geometry, such as in \cite{doCarmo1992} and \cite{Sakai1996}.

Let $M$ be a $n$-dimentional Hadamard  manifold. {\it In this paper, all manifolds $M$ are assumed to be Hadamard finite dimensional}. We denote by $T_pM$ the $n$-dimentional {\it tangent space} of $M$ at $p$, by $TM=\cup_{p\in M}T_pM$ {\itshape{tangent bundle}} of $M$ and by ${\cal X}(M)$ the space of smooth vector fields on $M$. The Riemannian metric is denoted by  $\langle \,,\, \rangle$ and  the corresponding norm  by $\| \; \|$. Denote the lenght of piecewise smooth curves $\gamma:[a,b]\rightarrow M$ joining $p$ to $q$, i.e., such that $\gamma(a)=p$ and $\gamma(b)=q$, by
\[
\ell(\gamma)=\int_a^b\|\gamma^{\prime}(t)\|dt,
\]
and the Riemannian  distance by $d(p,q)$,  which induces the original topology on $M$, namely,  $( M, d)$ is a complete metric space and bounded and closed subsets are compact.  For $A\subset M$,  the notation  $ \mbox{int} (A)$ means the interior of the set $A$, and if $A$ is a nonempty set,  the distance  from $p\in M$ to $A$ is given by $d(p,A):= \inf \{d(p,q)~:~ q\in A\}.$   Let $\nabla$ be the Levi-Civita connection associated to $(M,{\langle} \,,\, {\rangle})$. A vector field $V$ along $\gamma$ is said to be {\it parallel} if $\nabla_{\gamma^{\prime}} V=0$. If $\gamma^{\prime}$ itself is parallel we say that $\gamma$ is a {\it geodesic}. Given that geodesic equation $\nabla_{\ \gamma^{\prime}} \gamma^{\prime}=0$ is a second order nonlinear ordinary differential equation, then geodesic $\gamma=\gamma _{v}(.,p)$ is determined by its position $p$ and velocity $v$ at $p$. It is easy to check that $\|\gamma ^{\prime}\|$ is constant. We say that $ \gamma $ is {\it normalized} if $\| \gamma ^{\prime}\|=1$. The restriction of a geodesic to a  closed bounded interval is called a {\it geodesic segment}. Since $M$ is a Hadamard manifolds the lenght   of the  geodesic segment  $\gamma$  joining $p$ to $q$ its equals $d(p,q)$, the parallel transport along $\gamma$ from $p$ to $q$ is denoted by $P_{pq}:T_{p}M\to T_{q}M$. Moreover, {\it exponential map} $\exp_{p}:T_{p}  M \to M $ is defined by $\exp_{p}v\,=\, \gamma _{v}(1,p)$ is  a diffeomorphism and, consequently, $M$ is diffeomorphic to the Euclidean space $\mathbb{R}^n $, $ n=dim M $.  Let ${q}\in M $ and $\exp^{-1}_{q}:M\to T_{p}M$ be the inverse of the exponential map.  Set $ d_{q}(p):=d(q,p)$ and note that $d_{q}(p)=\|\exp^{-1}_{p}q\|$.    Furthermore,  we know that
\begin{equation} \label{eq:coslaw}
d^2(p_1,p_3)+d^2(p_3,p_2)-2\left\langle \exp_{p_3}^{-1}p_1,\exp_{p_3}^{-1}p_2\right\rangle\leq d^2(p_1,p_2),  \qquad p_1, p_2 , p_3 \in M.
\end{equation}
\begin{equation} \label{eq:coslaw2}
\left\langle \exp^{-1}_{p_2}p_1, \, \exp^{-1}_{p_2}p_3\right\rangle+\left\langle \exp^{-1}_{p_3}p_1, \, \exp^{-1}_{p_3}p_2\right\rangle \geq d^2(p_2,p_3),  \ \qquad p_1, p_2 , p_3 \in M.
\end{equation}
A set,  $\Omega\subseteq M$ is said to be {\it convex}  if any geodesic segment with end points in $\Omega$ is contained in
$\Omega$, that is,  if $\gamma:[ a,b ]\to M$ is a geodesic such that $x =\gamma(a)\in \Omega$ and $y =\gamma(b)\in \Omega$; then $\gamma((1-t)a + tb)\in \Omega$ for all $t\in [0,1]$.  Let $\Omega\subset M$ be a convex set and $p\in M$. Thus the  {\it projection} ${\rm P}_{\Omega}(p):=\{ \tilde{q}\in \Omega ~: ~ d(\tilde{q}, p)\leq d(q, p),~ q\in \Omega\}$ of $p$ onto $\Omega$ satisfies
\begin{equation} \label{eq:proj}
\left\langle \exp^{-1}_{{\rm P}_{\Omega}(p)}q, \, \exp^{-1}_{{\rm P}_{\Omega}(p)}p\right\rangle \leq 0, \qquad q\in \Omega,
\end{equation}
see \cite[Corollary 3.1]{FerreiraOliveira2002}. The metric projection onto a nonempty, closed and convex subset $\Omega\subset M$ is a firmly nonexpansive mapping; see \cite[Corollary 1]{LiLopesMartinWang2011}. As a consequence, the projection mapping  is nonexpansive, i.e.  there holds
\begin{equation} \label{eq:projnon-expansive}
 d({\rm P}_{\Omega}(p), {\rm P}_{\Omega}(q)) \leq d(p,q)  \qquad p,  q\in \Omega.
\end{equation}
\begin{lemma} \label{eq:ContExp}
Let $\bar{p},\bar{q}\in M$ and $\{p^k\},\{q^k\}\subset M$ be such that $\lim_{k\to +\infty} p^k=\bar{p}$ and $\lim_{k\to +\infty}q^k=\bar{q}$. Then the following assertions hold.
\begin{itemize}
\item[i)] For any $q\in M$,  $\lim_{k\to +\infty} \exp^{-1}_{p^k}q=  \exp^{-1}_{\bar{p}}q$ and $\lim_{k\to +\infty} exp^{-1}_qp^k= \exp^{-1}_q\bar{p}$;
\item[ii)] If $v^k\in T_{p^k}M$ and $\lim_{k\to +\infty}  v^k=\bar{v}$, then $\bar{v}\in T_{\bar{p}}M$;
\item[iii)] For any $u\in T_{\bar{p}}M$, the function $F:M\rightarrow TM$ defined by $F(p)=\rm P_{\bar{p}p}u$ each $p\in M$ is continuous on $M$;
\item[iv)] $\lim_{k\to +\infty}  \exp^{-1}_{p^k}q^k=\exp^{-1}_{\bar{p}}\bar{q}$.
\end{itemize}
\end{lemma}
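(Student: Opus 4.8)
The plan is to reduce all four items to the single fact that, on a Hadamard manifold, the exponential map is a diffeomorphism in a joint sense: the map $\Phi\colon TM\to M\times M$, $\Phi(v)=(\pi(v),\exp_{\pi(v)}v)$, with $\pi\colon TM\to M$ the canonical projection, is a smooth diffeomorphism whose inverse is precisely $(p,q)\mapsto \exp_p^{-1}q$. Granting this standard fact, items i) and iv) become continuity statements for $\Phi^{-1}$, item ii) becomes continuity of $\pi$, and item iii) becomes smooth dependence of a linear ODE on a parameter. The only part carrying genuine content is iv), since there the vectors $\exp_{p^k}^{-1}q^k$ live in the varying fibres $T_{p^k}M$; I would therefore prove i), ii), iii) first and then combine i) with the law of cosines \eqref{eq:coslaw}.

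For i): fixing $q$, the map $p\mapsto \exp_p^{-1}q=\Phi^{-1}(p,q)$ is continuous, so $p^k\to\bar p$ gives $\exp_{p^k}^{-1}q\to\exp_{\bar p}^{-1}q$; symmetrically, $p\mapsto \exp_q^{-1}p$ is the smooth inverse $\exp_q^{-1}$ of a diffeomorphism, which yields the second limit. For ii): since $\pi$ is continuous and $\pi(v^k)=p^k$, passing to the limit gives $\pi(\bar v)=\lim_{k}p^k=\bar p$, i.e. $\bar v\in T_{\bar p}M$; here one only uses uniqueness of limits in the Hausdorff space $M$. For iii): $P_{\bar p p}u$ is the value at the endpoint of the solution of the linear parallel-transport equation along the unique geodesic joining $\bar p$ to $p$, whose initial velocity $\exp_{\bar p}^{-1}p$ depends smoothly on $p$ through $\Phi^{-1}$; continuous dependence of ODE solutions on parameters then makes $F$ continuous.

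The heart of the matter is iv), and the obstacle is making sense of convergence across varying fibres. I would invoke the standard description of the bundle topology: if $p^k\to\bar p$ in $M$ and the transported vectors $P_{p^k\bar p}w^k$ converge to $\bar w$ in the fixed space $T_{\bar p}M$, then $w^k\to\bar w$ in $TM$ (this is consistent with iii)). Set $a^k:=\exp_{p^k}^{-1}q^k$ and $b^k:=\exp_{p^k}^{-1}\bar q$, both in $T_{p^k}M$. By i), $b^k\to\exp_{\bar p}^{-1}\bar q$, hence $P_{p^k\bar p}b^k\to\exp_{\bar p}^{-1}\bar q$. Applying \eqref{eq:coslaw} with $p_3=p^k$, $p_1=q^k$, $p_2=\bar q$ gives $\|a^k-b^k\|^2=d^2(p^k,q^k)+d^2(p^k,\bar q)-2\langle a^k,b^k\rangle\le d^2(q^k,\bar q)\to 0$, and since parallel transport is a linear isometry, $\|P_{p^k\bar p}a^k-P_{p^k\bar p}b^k\|=\|a^k-b^k\|\to0$. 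Therefore $P_{p^k\bar p}a^k\to \exp_{\bar p}^{-1}\bar q$, which is exactly $a^k\to\exp_{\bar p}^{-1}\bar q$, establishing iv). The only delicate point is the characterization of convergence in $TM$ via parallel transport; the rest is bookkeeping with $\Phi$ and the already-recorded inequality \eqref{eq:coslaw}.
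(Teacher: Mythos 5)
Your proposal is correct and follows essentially the same route as the paper: items (i)--(iii) are standard (the paper simply cites Lemma 2.4 of the reference for them, where you argue via the diffeomorphism $\Phi(v)=(\pi(v),\exp_{\pi(v)}v)$), and for (iv) both arguments split $\exp^{-1}_{p^k}q^k$ into the term $\exp^{-1}_{p^k}q^k-\exp^{-1}_{p^k}\bar q$, controlled by the nonexpansiveness of $\exp^{-1}_{p^k}$ on a Hadamard manifold, plus a term handled by (i) and (iii) together with the parallel-transport description of convergence in $TM$. The only cosmetic difference is that you derive the estimate $\|\exp^{-1}_{p^k}q^k-\exp^{-1}_{p^k}\bar q\|\le d(q^k,\bar q)$ explicitly from \eqref{eq:coslaw}, whereas the paper asserts it directly from nonpositive curvature; the delicate point you flag (identifying bundle convergence with convergence of the parallel transports to the limit fibre) is used implicitly in the paper's proof as well.
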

\begin{proof}
For items (i), (ii) and (iii) see \cite[Lemma 2.4]{LiLopesMartin-Marquez2009}. For the item (iv), using triangular inequality we obtain
\begin{equation*} \parallel \exp^{-1}_{p^k}q^k-\rm P_{\bar{p}p^k}\exp^{-1}_{\bar{p}}\bar{q}\parallel\leq \parallel \exp^{-1}_{p^k}q^k-\exp^{-1}_{p^k}\bar{q}\parallel+\parallel \exp^{-1}_{p^k}\bar{q}-\rm P_{\bar{p}p^k}\exp^{-1}_{\bar{p}}\bar{q}\parallel.
\end{equation*}
Since $M$ have nonpositive curvature we have $\parallel \exp^{-1}_{p^k}q^k-\exp^{-1}_{p^k}\bar{q}\parallel\leq d(q^k,q)$. It follows then that
\begin{equation*} \parallel \exp^{-1}_{p^k}q^k-\rm P_{\bar{p}p^k}\exp^{-1}_{\bar{p}}\bar{q}\parallel\leq d(q^k,\bar{q})+\parallel \exp^{-1}_{p^k}\bar{q}-\rm P_{\bar{p}p^k}\exp^{-1}_{\bar{p}}\bar{q}\parallel.
\end{equation*}
Taking limit with $k$ goes to infinity  in the last inequality and combining items (i) and (iii) we can conclude that $\lim_{k\to +\infty}  \exp^{-1}_{p^k}q^k=\exp^{-1}_{\bar{p}}\bar{q}$.
\end{proof}
In the  following result we present an important property of the parallel transport, which will be importante to prove our main result.
\begin{lemma}\label{Parallel transport}
Let $\bar{p}\in M$, $\bar{v}\in T_{\bar{p}}M$, $\bar{t}\in[0,1)$, $\{p^k\}\subset M$, $\{v^k\}\subset T_{p^k}M$  and $\{t_k\}\subset (0,1)$ be such that $\lim_{k\to +\infty} p^k=\bar{p}$, $\lim_{k\to +\infty}v^k=\bar{v}$ and $\lim_{k\to +\infty} t_k=\bar{t}$. Let $\{q^k\}$ be defined by  $q^k:=\exp_{p^k}t_kv^k$, for all  $k=0, 1, \ldots$.  Then,  there holds
\begin{equation*}
\lim_{k\rightarrow+\infty}\rm P_{p^kq^k}v^k = \rm P_{\bar{p}\bar{q}}\bar{v}.
\end{equation*}
where $\bar{q}:=  \exp_{\bar{p}}\bar{t}\bar{v}= \lim_{k\rightarrow+\infty}q^k$.
\end{lemma}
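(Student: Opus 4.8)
The plan is to recognize the parallel transport $P_{p^k q^k} v^k$ as a geodesic velocity. Since the velocity field of a geodesic is parallel along itself, transporting the initial velocity reproduces the velocity at any later parameter. Concretely, let $\gamma_k(s) := \exp_{p^k}(s v^k)$, the geodesic with $\gamma_k(0) = p^k$, $\gamma_k'(0) = v^k$ and $\gamma_k(t_k) = q^k$. Because $\gamma_k'$ is parallel along $\gamma_k$, we have $P_{p^k q^k} v^k = \gamma_k'(t_k)$; similarly $P_{\bar p \bar q}\bar v = \bar\gamma'(\bar t)$, where $\bar\gamma(s) := \exp_{\bar p}(s\bar v)$ and $\bar q = \bar\gamma(\bar t)$. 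Thus the assertion is equivalent to the convergence $\gamma_k'(t_k) \to \bar\gamma'(\bar t)$.

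To establish this I would invoke the smoothness of the geodesic flow. The velocity $\gamma_k'(t_k)$ is the second component of $\phi(t_k, (p^k, v^k))$, where $\phi \colon \mathbb{R} \times TM \to TM$ is the geodesic flow $\phi(t,(p,v)) = (\exp_p(tv), \tfrac{\partial}{\partial t}\exp_p(tv))$. On a Hadamard manifold geodesics are complete and $\exp$ is a global diffeomorphism, so $\phi$ is defined on all of $\mathbb{R} \times TM$ and is smooth, being the flow of the (smooth) geodesic spray; see \cite{doCarmo1992, Sakai1996}. Item (ii) of Lemma \ref{eq:ContExp} guarantees $\bar v \in T_{\bar p}M$, so $(\bar p, \bar v)$ is a genuine limit of $(p^k, v^k)$ in the total space $TM$; combined with $t_k \to \bar t$, continuity of $\phi$ gives $\gamma_k'(t_k) \to \bar\gamma'(\bar t)$ at once (and its first component returns $q^k \to \bar q$, consistent with the statement).

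As a cross-check that stays within the lemmas already proved, note the exact identity $\exp^{-1}_{q^k} p^k = -t_k\, P_{p^k q^k} v^k$: the geodesic segment $s \mapsto \gamma_k(t_k s)$, $s\in[0,1]$, joins $p^k$ to $q^k$ with endpoint velocity $t_k\gamma_k'(t_k)$, which equals $-\exp^{-1}_{q^k} p^k$. For $\bar t \in (0,1)$ this yields $P_{p^k q^k} v^k = -t_k^{-1}\exp^{-1}_{q^k} p^k$, and item (iv) of Lemma \ref{eq:ContExp} (with $q^k \to \bar q$ and $p^k \to \bar p$) together with $t_k \to \bar t > 0$ gives the limit $-\bar t^{-1}\exp^{-1}_{\bar q}\bar p = P_{\bar p \bar q}\bar v$. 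I expect the only genuine obstacle to be the degenerate case $\bar t = 0$: there $\bar q = \bar p$ and the elementary quotient $-t_k^{-1}\exp^{-1}_{q^k} p^k$ becomes $0/0$ (both $\exp^{-1}_{q^k}p^k \to 0$ and $t_k \to 0$), so item (iv) no longer decides the limit. The geodesic-flow argument handles this uniformly, since at $\bar t = 0$ the velocity component of $\phi$ is the identity and returns $\bar v = P_{\bar p \bar p}\bar v$. Throughout, the one point requiring care is that $v^k$ and $P_{p^k q^k} v^k$ live in varying tangent spaces, so every limit must be read in $TM$; items (ii) and (iii) of Lemma \ref{eq:ContExp} are precisely what legitimize these comparisons.
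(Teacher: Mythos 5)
Your argument is correct, but it reaches the limit by a genuinely different route than the paper. Both proofs start from the same observation that $P_{p^kq^k}v^k=\gamma_k'(t_k)$ is the velocity of the geodesic $\gamma_k(t)=\exp_{p^k}(tv^k)$ at the parameter $t_k$, and you correctly diagnose that the elementary identity $P_{p^kq^k}v^k=-t_k^{-1}\exp^{-1}_{q^k}p^k$ degenerates to $0/0$ when $\bar t=0$. Where you resolve this by invoking the smoothness of the geodesic flow on $TM$ (a correct but external fact, used as a black box), the paper instead reads the same velocity off the \emph{other} endpoint of the geodesic: since $\exp^{-1}_{q^k}\bigl(\exp_{p^k}v^k\bigr)=(1-t_k)\gamma_k'(t_k)$, one has
\begin{equation*}
P_{p^kq^k}v^k=\frac{1}{1-t_k}\exp^{-1}_{q^k}\exp_{p^k}v^k ,
\end{equation*}
and because $\bar t\in[0,1)$ the denominator $1-t_k\to 1-\bar t$ stays bounded away from zero, so item (iv) of Lemma~\ref{eq:ContExp} applies uniformly in all cases, including $\bar t=0$; the limit is then identified as $P_{\bar p\bar q}\bar v$ by the same identity for the limit data. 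The paper's device keeps the whole proof inside the already-established continuity of $(p,q)\mapsto\exp^{-1}_pq$, which explains the hypothesis $\bar t\in[0,1)$ in the statement, while your flow argument is shorter and arguably more conceptual but imports a tool the paper never otherwise uses. Either proof is acceptable; if you want to stay within the paper's toolkit, replace the flow argument by the $(1-t_k)^{-1}$ identity above.
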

\begin{proof}
Since $M$ is a Hadamard manifold and $q^k=\exp_{p^k}t_kv^k$, for all  $k=0, 1, \ldots$,  we have
\begin{equation}\label{eq. Lemma1.3}
-\frac{1}{t_k}\exp^{-1}_{p^k}q^k = v^k, \qquad  k=0, 1, \ldots.
\end{equation}
Hence, the definition of   parallel transport along of the geodesic $\gamma_k(t)= \exp_{p^k}tv^k$  implies that
\begin{equation*}
\frac{1}{1-t_k}\exp^{-1}_{q^k}\exp_{p^k}v^k= -\frac{1}{t_k}\exp^{-1}_{q^k}p^k = P_{p^kq^k}v^k, \qquad  k=0, 1, \ldots.
\end{equation*}
Thus, taking the limit  as $k$ goes to $+\infty$ and using Lemma~\ref{eq:ContExp} we conclude that
\begin{equation*}
\lim_{k\rightarrow+\infty} P_{p^kz^k}v^k =\frac{1} {1-\bar{t}}\exp^{-1}_{\bar{q}}\exp_{\bar{p}}\bar{v}.
\end{equation*}
Since  $\bar{q}=  \lim_{k\rightarrow+\infty}q^k$, taking limit as   $k$ goes to $+\infty$in \eqref{eq. Lemma1.3} and combining with  the last equality we obtain the  desired result.
\end{proof}

Let $\Omega\subset M$ be a convex set, and $p\in \Omega$. Following \cite{LiLopesMartin-Marquez2009}, we define the {\it normal cone} to $\Omega$ at $p$ by
\begin{equation} \label{eq:nc}
N_{\Omega}(p):=\left\{w\in T_pM~:~\langle w, \exp_{p}^{-1}q \rangle\leq 0, ~q\in \Omega \right\}.
\end{equation}
Given a multivalued vector field  $X: M \rightrightarrows  TM$,   the  domain of $X$  is the set defined by
\begin{equation} \label{eq:dr}
\mbox{dom}X:=\left\{ p\in M ~: ~X(p)\neq \varnothing \right\},
\end{equation}
Let  $X: M \rightrightarrows  TM$ be a vector field and $\Omega\subset M$. We define the following quantity
 $$
 m_X(\Omega):=\sup_{q\in\Omega}\left\{\|u\|~:~u\in X(q)\right\}.
 $$
We say that  $X$  is {\it locally bounded} iff,  for all $p \in  \mbox{int}( \mbox{dom}\,X) $,   there exist an open  set $U\subset M$  such  that   $p\in U$ and  there holds $m_X(U) < +\infty,$ and {\it bounded on bounded sets }  iff for all bounded set $V\subset M$  such that its closure $\overline{V} \subset  \mbox{int}( \mbox{dom}\,X)$ it holds that   $m_X(V) < +\infty$; see an equivalent definition in \cite{LiLopesMartin-Marquez2009}. The multivalued vector field   $X$ is said to be {\it upper semicontinuous} at $p\in\mbox{dom}X $  iff, for any open set  $V\subset T_pM$ such that $X(p) \in V$, there exists an open set  $U \subset M$ with $p\in U$ such that $P_{qp}X(q)\subset V$, for any $q\in U$. For two  multivalued vector fields  $X, Y$ on $M$, the notation $ X\subset Y$ implies that $X(p)\subset Y(p)$, for all $p\in M$.
\begin{definition}
A sequence $\{p^k\} \subset (M,d)$ is said to Fej\'er convergence to a nonempty set $W\subset M$  if, for every $q \in W$,   we have $d^2(q,p^{k+1})\leq d^2(q,p^k)$, for $k=0,1, \ldots$ and    $ q\in W$.
\end{definition}
\begin{proposition}\label{fejer}
Let $\{p^k\}$ be a sequence in $(M,d)$. If $\{p^k\}$ is Fej\'er convergent to nonempty set $W\subset M$, then $\{p^k\}$ is bounded. If furthermore, an accumulation point $p$ of $\{p^k\}$ belongs to $W$, then $\lim_{k\rightarrow \infty}p^k=p$.
\end{proposition}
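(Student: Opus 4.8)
The plan is to exploit the defining property of Fej\'er convergence directly: for each fixed $q \in W$ the real sequence $\{d^2(q,p^k)\}$ is non-increasing, and since distances are nonnegative the same holds for $\{d(q,p^k)\}$. Both assertions follow from this monotonicity together with elementary facts about monotone real sequences.

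For the boundedness claim, I would fix an arbitrary $q \in W$, which is possible since $W \neq \varnothing$. From the definition of Fej\'er convergence, $d(q,p^{k+1}) \leq d(q,p^k)$ for every $k$, so by induction $d(q,p^k) \leq d(q,p^0)$ for all $k$. Consequently the entire sequence lies in the closed ball of center $q$ and radius $d(q,p^0)$, which is a bounded set; hence $\{p^k\}$ is bounded.

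For the convergence claim, suppose $p$ is an accumulation point of $\{p^k\}$ with $p \in W$. Applying the Fej\'er property at $q = p$, the sequence $\{d^2(p,p^k)\}$ is non-increasing and bounded below by $0$, so it converges to some $\ell \geq 0$. Since $p$ is an accumulation point, there is a subsequence $\{p^{k_j}\}$ with $\lim_{j\to\infty} p^{k_j} = p$, whence $d^2(p,p^{k_j}) \to d^2(p,p) = 0$. Because the full sequence $\{d^2(p,p^k)\}$ converges to $\ell$ while one of its subsequences tends to $0$, we conclude $\ell = 0$, and therefore $d(p,p^k) \to 0$, that is, $\lim_{k\to\infty} p^k = p$.

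The argument is entirely standard and presents no genuine obstacle; the two points to keep in view are that monotonicity of $\{d^2(q,p^k)\}$ transfers to $\{d(q,p^k)\}$ since both are nonnegative, and that a convergent real sequence possessing a subsequence with limit $0$ must itself tend to $0$. Completeness of $(M,d)$ and compactness of closed bounded subsets ensure that the accumulation-point hypothesis in the second part is not vacuous for a bounded sequence, but the proof of convergence itself invokes only the existence of the accumulation point, not any further geometric structure of $M$.
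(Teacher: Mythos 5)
Your proof is correct and is exactly the standard argument; the paper itself states Proposition~\ref{fejer} without proof, so there is nothing it does differently. Both steps --- boundedness via the monotone bound $d(q,p^k)\leq d(q,p^0)$ for a fixed $q\in W$, and convergence via the fact that the non-increasing sequence $\{d^2(p,p^k)\}$ has a subsequence tending to $0$ and hence converges to $0$ --- are carried out correctly and use only the definition of Fej\'er convergence given in the paper.
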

\begin{lemma}\label{lemmaseq.}
Let $\{\rho_k\}$ be a sequence of positive real numbers and $\theta_0>0$. Define the sequence $\{\theta_{k}\}$  by $\theta_{k+1}=\min\{\theta_k,\rho_k\}$. The limit $\bar{\theta}$ of $\{\theta_k\}$ is equal to 0 if and only if 0 is a cluster point of $\{\rho_k\}$.
\end{lemma}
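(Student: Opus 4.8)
The plan is to first record the elementary structural facts about $\{\theta_k\}$ and then handle the two implications separately. Since $\theta_0>0$ and every $\rho_k>0$, an immediate induction using $\theta_{k+1}=\operatorname{min}\{\theta_k,\rho_k\}$ gives $\theta_k>0$ for all $k$; moreover $\theta_{k+1}\le\theta_k$, so $\{\theta_k\}$ is non-increasing and bounded below by $0$. Hence it converges, and its limit satisfies $\bar\theta=\inf_k\theta_k\ge 0$. The lemma is then just a comparison of this limit with the cluster set of $\{\rho_k\}$, and each direction is short.

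For the direction ``$0$ is a cluster point of $\{\rho_k\}\Rightarrow\bar\theta=0$'', I would choose a subsequence $\{\rho_{k_j}\}$ with $\rho_{k_j}\to 0$. The recursion gives $0<\theta_{k_j+1}=\operatorname{min}\{\theta_{k_j},\rho_{k_j}\}\le\rho_{k_j}$. Because $\{\theta_{k_j+1}\}_j$ is a subsequence of the convergent sequence $\{\theta_k\}$, it also tends to $\bar\theta$; passing to the limit in $0\le\theta_{k_j+1}\le\rho_{k_j}$ squeezes $\bar\theta$ between $0$ and $\lim_j\rho_{k_j}=0$, so $\bar\theta=0$.

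For the converse I would argue by contraposition, assuming $0$ is not a cluster point of $\{\rho_k\}$ and proving $\bar\theta>0$. If $0$ is not a cluster point, then (using $\rho_k>0$) there exist $\epsilon>0$ and an index $N$ with $\rho_k\ge\epsilon$ for all $k\ge N$. Let $m$ be the smaller of $\theta_N$ and $\epsilon$, so $m>0$. A one-line induction shows $\theta_k\ge m$ for all $k\ge N$: it holds at $k=N$, and if $\theta_k\ge m$ then $\theta_{k+1}=\operatorname{min}\{\theta_k,\rho_k\}\ge\operatorname{min}\{m,\epsilon\}=m$ since $m\le\epsilon$. Therefore $\bar\theta=\lim_k\theta_k\ge m>0$, which is exactly the contrapositive of the claim.

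There is no genuinely hard step here; the whole lemma is elementary once the monotonicity of $\{\theta_k\}$ is recorded. The only point requiring a little care is identifying where $\rho_k$ actually controls $\theta_k$. I note in passing that a direct (rather than contrapositive) proof of the converse is also available from the observation that a strict decrease at step $k$ forces $\theta_{k+1}=\rho_k$: if $\bar\theta=0$ then $\{\theta_k\}$ cannot be eventually constant (its positive terms would keep the infimum away from $0$), hence it strictly decreases infinitely often, yielding a subsequence with $\rho_k=\theta_{k+1}\to 0$. I would nonetheless keep the contraposition/induction version above, as it is cleaner and avoids this case distinction.
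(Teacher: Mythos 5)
Your proof is correct and complete. Note that the paper does not actually prove this lemma: it simply cites Lemma~4.9 of Iusem and P\'erez (2000), so there is no internal argument to compare against. Your self-contained treatment --- recording that $\{\theta_k\}$ is positive and non-increasing (hence convergent), squeezing $\bar{\theta}$ via $\theta_{k_j+1}\le\rho_{k_j}$ for one direction, and using the contrapositive with the uniform lower bound $m=\operatorname{min}\{\theta_N,\epsilon\}$ for the other --- is exactly the standard elementary argument, and the one subtle point (that ``$0$ is not a cluster point'' of a positive sequence yields an eventual uniform lower bound $\rho_k\ge\epsilon$) is handled correctly.
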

\begin{proof}
See \cite[Lemma 4.9]{IusemPerez2000}.
\end{proof}
A multivalued vector field $X$ is said to be {\it monotone} iff
\begin{equation}\label{eq2.1}
\left\langle P_{pq} u-v, \, \exp_{q}^{-1}p\right\rangle \geq 0, \qquad  \qquad  ~ p,\,q\in \mbox{dom}X,  \quad u\in X(p), ~ v\in X(q).
\end{equation}
Moreover,  a monotone vector field $X$  is said to be  {\it maximal monotone\/}, iff for each $p\in \mbox{dom}\,X$ and $u\in T_pM$, there holds:
\begin{equation}\label{eq2.3}
\left\langle P_{pq} u-v, \,\exp_{q}^{-1}p\right\rangle \geq 0, \quad \qquad ~ q\in \mbox{dom}X, \quad ~ v\in X(q)  ~ \Rightarrow ~  u\in X(p).
\end{equation}
The concept of monotonicity in the Riemannian context was first  introduced in \cite{Nemeth1999},  for a single-valued case and   in \cite{NetoFerreiraLucambio2000} for a multivalued case. Further, the  notion of  maximal monotonicity of a vector field was  introduced  in \cite{LiLopesMartin-Marquez2009} and  in  \cite[Theorem~5.1]{LiLopesMartin-Marquez2009} is showed that  the  subdifferential  of the convex function  is  maximal monotone. The next  result  is an  extension to the Hadamard manifolds of its counterpart Euclidean and its  proof   is an immediate consequence of the definition of maximal monotonicity and normal cone, respectively.
\begin{lemma}\label{mon.cone}
Let $\Omega\subset M$ be a convex set and $X$ be a maximal monotone vector field such that $\emph{dom}\,X=M$. Then,  $X+N_{\Omega}$  is a maximal monotone vector field.
\end{lemma}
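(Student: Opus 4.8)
The plan is to verify the two defining conditions separately: first that $X+N_\Omega$ is monotone, and then that it is maximal, the latter being the substantial part. Note at the outset that $\mbox{dom}(X+N_\Omega)=\Omega$, since $\mbox{dom}\,X=M$ while $N_\Omega(p)\neq\varnothing$ (indeed $0\in N_\Omega(p)$) exactly when $p\in\Omega$.

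For monotonicity it suffices to show $N_\Omega$ is monotone, because the sum of two monotone vector fields is monotone and $X$ is monotone by hypothesis. Fix $p,q\in\Omega$, $u\in N_\Omega(p)$, $v\in N_\Omega(q)$ and split
\begin{equation*}
\langle P_{pq}u-v,\exp_q^{-1}p\rangle=\langle P_{pq}u,\exp_q^{-1}p\rangle-\langle v,\exp_q^{-1}p\rangle.
\end{equation*}
The second term is $\geq 0$ directly from \eqref{eq:nc}, since $p\in\Omega$ and $v\in N_\Omega(q)$. For the first term I would use that parallel transport is an isometry with $P_{pq}^{-1}=P_{qp}$, together with the elementary geodesic identity $P_{qp}\exp_q^{-1}p=-\exp_p^{-1}q$ (the initial velocity of the geodesic from $q$ to $p$, transported to $p$, is the negative of $\exp_p^{-1}q$); this gives $\langle P_{pq}u,\exp_q^{-1}p\rangle=-\langle u,\exp_p^{-1}q\rangle\geq 0$, again by \eqref{eq:nc}. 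Hence $N_\Omega$, and therefore $X+N_\Omega$, is monotone. This part is indeed immediate from the definitions.

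The maximality is where the real difficulty lies. Let $p\in\Omega$ and $u\in T_pM$ satisfy the premise of \eqref{eq2.3} for $X+N_\Omega$, i.e. $\langle P_{pq}u-w,\exp_q^{-1}p\rangle\geq 0$ for every $q\in\Omega$ and every $w\in X(q)+N_\Omega(q)$; the goal is to decompose $u=u_X+u_N$ with $u_X\in X(p)$ and $u_N\in N_\Omega(p)$. Choosing $w=v$ with $v\in X(q)$ (i.e. taking $0\in N_\Omega(q)$) yields the inequality only for $q$ ranging over $\Omega$, whereas the maximality of $X$ in \eqref{eq2.3} requires testing against all $q\in M=\mbox{dom}\,X$. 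This mismatch between the range of admissible test points and the full domain needed for $X$ is exactly what makes the statement a genuine Riemannian analogue of Rockafellar's sum theorem rather than a one-line verification; moreover a reduction to a single tangent space through a chart is not available, since monotonicity is a global geodesic notion and is not preserved under charts.

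To overcome this I would argue through a Minty-type surjectivity characterization of maximal monotonicity on Hadamard manifolds: the monotone field $X+N_\Omega$ is maximal if and only if its resolvent has full domain, i.e. for every $z\in M$ and $\lambda>0$ the resolvent inclusion $-\tfrac{1}{\lambda}\exp_p^{-1}z\in X(p)+N_\Omega(p)$ admits a solution $p\in\Omega$. Because $\mbox{dom}\,X=M$, the maximal monotone field $X$ is locally bounded and upper semicontinuous, so this inclusion can be recast as a fixed-point problem on $\Omega$ assembled from the projection ${\rm P}_\Omega$ and the resolvent of $X$; the nonexpansiveness of ${\rm P}_\Omega$ in \eqref{eq:projnon-expansive}, the continuity statements of Lemma~\ref{eq:ContExp}, and a compactness argument on a suitable closed and bounded (hence compact) subset of $\Omega$ then deliver existence. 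The step I expect to be delicate is establishing solvability of this fixed-point problem, uniformly in $z$; once that is in hand, the Minty characterization combined with the monotonicity established above yields that $X+N_\Omega$ is maximal monotone.
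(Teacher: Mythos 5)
Your monotonicity half is complete and correct: the splitting of $\langle P_{pq}u-v,\exp_q^{-1}p\rangle$, the use of \eqref{eq:nc}, and the identity $P_{qp}\exp_q^{-1}p=-\exp_p^{-1}q$ are all sound, and your diagnosis of where the difficulty lies is also right. The paper itself offers no argument here --- it merely asserts that the lemma ``is an immediate consequence of the definition of maximal monotonicity and normal cone'' --- and you are correct to contest that: the premise of \eqref{eq2.3} for $X+N_\Omega$ only supplies test points $q\in\Omega$, while invoking the maximality of $X$ requires testing against all $q\in M$, so the statement is a genuine Riemannian analogue of Rockafellar's sum theorem rather than a definitional consequence.

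The gap is that your maximality argument is a programme, not a proof. Two essential ingredients are invoked but never established: (1) the Minty-type equivalence ``maximal $\Leftrightarrow$ the resolvent inclusion $-\tfrac{1}{\lambda}\exp_p^{-1}z\in X(p)+N_\Omega(p)$ is solvable for every $z$'' for point-to-set vector fields on a Hadamard manifold, which is itself a nontrivial theorem not available in the paper (even its easy direction needs the comparison inequalities \eqref{eq:coslaw}--\eqref{eq:coslaw2} and the uniqueness of the resolvent point); and (2) the surjectivity itself, i.e.\ solvability of the fixed-point problem built from ${\rm P}_\Omega$ and the resolvent of $X$, which you explicitly defer (``the step I expect to be delicate\dots once that is in hand''). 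That second step is essentially equivalent in depth to the lemma being proved, so as written the proposal does not close the argument. A route that stays closer to the definitions would be: given $(p,u)$ monotonically related to the graph of $X+N_\Omega$ over $\Omega$, move along geodesics $q_t=\exp_p(t\xi)$ into $\Omega$, use the local boundedness and upper Kuratowski semicontinuity of $X$ (both available since $\dom X=M$) to extract limits $v_\xi\in X(p)$ with $\langle u-v_\xi,\xi\rangle\le 0$ for each feasible direction $\xi$, and then run a separation/minimax argument over the compact convex set $X(p)$ to produce a single $v\in X(p)$ with $u-v\in N_\Omega(p)$; this last compactness-plus-separation step is the real content and is what neither your proposal nor the paper supplies.
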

A multivalued vector field $X$ is said to be {\it upper Kuratowski semicontinuous} at $p$ if, for any open set $V$ satisfying $X(p)\subset V\subset T_pM$, there exists an open neighbourhood $U$ of $p$ such that $P_{qp}X(q)\subset V$ for any $q\in U$.  When $X$ is upper Kuratowski semicontinuous, for any $p\in \mbox{dom} X$, we say that $X$ is upper Kuratowski semicontinuous; see \cite{LiLopesMartin-Marquez2009}. It has been showed   in  \cite{LiLopesMartin-Marquez2009} that   maximal monotone vector fields are  upper semicontinuous Kuratowski and, if  additionally $\mbox{dom}X=M$, then $X$ is locally bounded $M$.

\section{ The enlargement of monotone vector fields} \label{sec3}
In this section,  we recall some properties of the $\epsilon$-enlargement  of maximal monotone vector fields and show  its  lower semicontinuous,  in order to obtain   the convergence of the extragradient method. We begin by recalling  the notion of   $\epsilon$-enlargement  for  multivalued  vector  fields  on Hadamard manifolds introduced in \cite{BatistaBentoFerreira2015-2}.

\begin{definition} \label{def.enl.X}
Let  $X$  be a multivalued monotone vector field on $M$   and $\epsilon \geq 0$.   The  $\epsilon$-enlargement   $X^{\epsilon}: M   \rightrightarrows  TM $  of $X$   is the  multivalued  vector  field  defined by
\begin{equation} \label{enl.X}
    X^{\epsilon}(p):=\left\{ u\in T_pM~:~ \left\langle P_{pq} u-v, \,\exp_{q}^{-1}p\right\rangle \geq  -\epsilon, ~  q\in \emph{dom}X, ~  v\in X(q) \right\}, \qquad p\in \emph{dom}X.
\end{equation}
\end{definition}
The  next  proposition is a combination of  \cite[Proposition 2.6, Proposition 2.2]{BatistaBentoFerreira2015-2}.
\begin{proposition} \label{prop.elem.ii}
Let  $X$ be a monotone vector field on $M$ and $\epsilon\geq0$. Then,  $\emph{dom}X \subset \emph{dom}X^\epsilon$. In particular,  if $\mbox{dom}X=M$ then $\emph{dom} X^\epsilon=\emph{dom}X$.  Moreover, if $X$ is maximal monotone then    $X^0=X$ and $X^\epsilon$ is bounded on bounded sets.
\end{proposition}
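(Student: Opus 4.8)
The plan is to treat the four assertions separately, since the first three are short consequences of the definitions while the bounded-on-bounded-sets property is where the real work lies.

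For the inclusion $\dom X \subset \dom X^\epsilon$, I would fix $p \in \dom X$ and pick any $u \in X(p)$. By monotonicity \eqref{eq2.1}, for every $q \in \dom X$ and $v \in X(q)$ one has $\langle P_{pq} u - v, \exp_q^{-1} p\rangle \geq 0 \geq -\epsilon$, so $u \in X^\epsilon(p)$; in particular $X(p) \subset X^\epsilon(p)$ and $p \in \dom X^\epsilon$. The ``in particular'' claim is then immediate: if $\dom X = M$, then $M = \dom X \subset \dom X^\epsilon \subset M$, forcing $\dom X^\epsilon = \dom X$. For the identity $X^0 = X$ under maximal monotonicity, the inclusion $X \subset X^0$ is the case $\epsilon = 0$ just proved, while the reverse inclusion is exactly the defining implication \eqref{eq2.3} of maximal monotonicity: any $u \in X^0(p)$ satisfies $\langle P_{pq} u - v, \exp_q^{-1} p\rangle \geq 0$ for all $q \in \dom X$ and $v \in X(q)$, whence $u \in X(p)$.

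The substantial part is showing that $X^\epsilon$ is bounded on bounded sets. Let $V$ be bounded with $\overline{V} \subset \inter(\dom X)$; since closed bounded subsets of $M$ are compact, I may fatten $V$ to a compact neighborhood $V_\delta := \{p : d(p,\overline{V}) \le \delta\} \subset \inter(\dom X)$ for some $\delta > 0$. Using the local boundedness of maximal monotone vector fields on the interior of their domain (Section~\ref{sec2}), a finite subcover of $V_\delta$ yields a constant $C := m_X(V_\delta) < +\infty$. Now fix $q_0 \in V$ and $u \in X^\epsilon(q_0)$ with $u \ne 0$ (the case $u = 0$ being trivial). The idea is to probe the defining inequality \eqref{enl.X} of $X^\epsilon(q_0)$ along the geodesic issuing from $q_0$ in the direction of $u$: set $q := \exp_{q_0}\!\big(\delta\, u/\|u\|\big)$, so that $d(q,q_0)=\delta$, hence $q \in V_\delta \subset \dom X$, and I may choose $v \in X(q)$ with $\|v\| \le C$.

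The key geometric computation is the transport identity $\exp_q^{-1} q_0 = -(\delta/\|u\|)\, P_{q_0 q} u$, which holds because the velocity field of a geodesic is parallel and reversing the geodesic negates its terminal velocity. Substituting this into $\langle P_{q_0 q} u - v, \exp_q^{-1} q_0\rangle \ge -\epsilon$ and using that $P_{q_0 q}$ is a linear isometry (so $\|P_{q_0 q} u\| = \|u\|$) together with the Cauchy--Schwarz bound $\langle v, P_{q_0 q} u\rangle \le C\|u\|$, I obtain, after dividing by $\|u\|$, the estimate $\|u\| \le C + \epsilon/\delta$. As this bound is uniform over $q_0 \in V$ and $u \in X^\epsilon(q_0)$, it gives $m_{X^\epsilon}(V) \le C + \epsilon/\delta < +\infty$, as required. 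The main obstacle I anticipate is this last part: securing the uniform constant $C$ over the compact fattening $V_\delta$ via local boundedness, and getting the signs right in the transport identity, so that probing in the direction $+u/\|u\|$ (rather than $-u/\|u\|$) yields an \emph{upper} bound on $\|u\|$ rather than a vacuous lower one.
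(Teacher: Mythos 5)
Your proof is correct. Note that the paper itself gives no proof of Proposition~\ref{prop.elem.ii}: it merely states that the result is a combination of Propositions 2.2 and 2.6 of the cited companion paper, so there is no in-paper argument to compare against. Your self-contained argument is the natural one. The first three assertions follow from the definitions exactly as you say (with $\dom X^\epsilon\subset\dom X$ built into Definition~\ref{def.enl.X}, which is what turns the ``in particular'' clause into an equality), and the bounded-on-bounded-sets part is the standard Euclidean probing argument transplanted to Hadamard manifolds: the transport identity $\exp_q^{-1}q_0=-(\delta/\|u\|)\,P_{q_0q}u$ is valid because the velocity field of a geodesic is parallel, and substituting it into \eqref{enl.X} does yield $\|u\|\le C+\epsilon/\delta$ after Cauchy--Schwarz. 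The one caveat worth flagging concerns the constant $C$: Section~\ref{sec2} of the paper records local boundedness of maximal monotone vector fields only under the hypothesis $\dom X=M$, whereas you invoke it on $\inter(\dom X)$ in general. That stronger form is true (and is what the cited reference relies on), but if you want to lean only on facts stated in this paper you should either add the hypothesis $\dom X=M$ to this part of the proposition or justify local boundedness on $\inter(\dom X)$ separately; the compactness bookkeeping for the fattened set $V_\delta$ is otherwise fine.
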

Throughout the paper we will also need of the following properties of  $\epsilon$-enlargement  of  maximal monotone vector fields. Its proofs can be  found in \cite{BatistaBentoFerreira2015-2}.
\begin{proposition} \label{prop.elem.X}
Let  $X$, $X_1$ and $X_2$  be  multivalued monotone vector fields on $M$ and  $\epsilon, \epsilon_1, \epsilon_2 \geq 0.$ Then, there hold:
\begin{itemize}
\item[i)] If $\epsilon_1\geq\epsilon_2\geq0$ then $X^{\epsilon_2}\subset X^{\epsilon_1}$;
\item[ii)] $X_1^{\epsilon_1}+X_2^{\epsilon_2}\subset(X_1+X_2)^{\epsilon_1+\epsilon_2}$;
\item[iii)] $X^\epsilon(p)$ is closed and convex for all $p\in M$;
\item[iv)] $\alpha X^\epsilon=(\alpha X)^{\alpha\epsilon}$ for all $\alpha\geq0$;
\item[v)] $\alpha X^\epsilon_1+(1-\alpha)X^\epsilon_2\subset(\alpha X_1+(1-\alpha) X_2)^\epsilon$ for all $\alpha\in[0,1]$;
\item[vi)] If $E\subset\mathbb{R}_+$, then $\bigcap_{\epsilon\in E}X^\epsilon=X^{\overline{\epsilon}}$ with $\overline{\epsilon}=\inf E$.
\end{itemize}
Moreover, if   $\{ \epsilon^k\}$ is a sequence of positive numbers and   $\{ (p^k, \, u^k)\} $ is a sequence in $TM$ such that
 $\overline{\epsilon}=\lim_{k\rightarrow\infty}\epsilon^k$, $\overline{p}=\lim_{k\rightarrow\infty}p^k$, $\overline{u}=\lim_{k\rightarrow\infty}u^k$ and $u^k\in X^{\epsilon_k}(p^k)$ for all $k$, then $\overline{u}\in X^{\overline{\epsilon}}(\overline{p})$.
\end{proposition}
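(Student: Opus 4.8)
The plan is to obtain (i)--(vi) by direct manipulation of the defining inequality in Definition~\ref{def.enl.X} and to settle the concluding limiting assertion by a continuity argument. The underlying observation, used repeatedly, is that for fixed $p,q\in\dom X$ and $v\in X(q)$ the map $u\mapsto\langle P_{pq}u-v,\exp_q^{-1}p\rangle$ is affine, since $P_{pq}$ is linear and the metric is linear in its first argument.

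First I would dispatch the algebraic parts. For (i), membership in $X^{\epsilon_2}(p)$ gives $\langle P_{pq}u-v,\exp_q^{-1}p\rangle\geq-\epsilon_2\geq-\epsilon_1$, hence $u\in X^{\epsilon_1}(p)$. For (ii), I add the defining inequalities for $u_i\in X_i^{\epsilon_i}(p)$ and $v_i\in X_i(q)$ and use linearity of $P_{pq}$ to get $\langle P_{pq}(u_1+u_2)-(v_1+v_2),\exp_q^{-1}p\rangle\geq-(\epsilon_1+\epsilon_2)$; since every element of $(X_1+X_2)(q)$ has the form $v_1+v_2$, this yields $u_1+u_2\in(X_1+X_2)^{\epsilon_1+\epsilon_2}(p)$. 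Part (iii) is immediate from the affine observation, as $X^{\epsilon}(p)$ is then an intersection over $q,v$ of closed half-spaces of $T_pM$, hence closed and convex. For (iv) with $\alpha>0$ I multiply the defining inequality by $\alpha$, use $P_{pq}(\alpha u)=\alpha P_{pq}u$, and note that $\alpha v$ runs over $(\alpha X)(q)$ as $v$ runs over $X(q)$, giving $u\in X^{\epsilon}(p)\Leftrightarrow\alpha u\in(\alpha X)^{\alpha\epsilon}(p)$; the case $\alpha=0$ is checked directly. Part (v) then follows by combining (iv) and (ii), namely $\alpha X_1^{\epsilon}+(1-\alpha)X_2^{\epsilon}=(\alpha X_1)^{\alpha\epsilon}+((1-\alpha)X_2)^{(1-\alpha)\epsilon}\subset(\alpha X_1+(1-\alpha)X_2)^{\epsilon}$. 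For (vi), the inclusion $X^{\overline{\epsilon}}\subset\bigcap_{\epsilon\in E}X^{\epsilon}$ is (i) applied to each $\epsilon\geq\overline{\epsilon}=\inf E$, while the reverse inclusion follows because $u\in\bigcap_{\epsilon\in E}X^{\epsilon}(p)$ forces $\langle P_{pq}u-v,\exp_q^{-1}p\rangle\geq-\epsilon$ for all $\epsilon\in E$, and taking the supremum over $\epsilon\in E$ of the right-hand side gives the bound $-\inf E=-\overline{\epsilon}$.

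For the final statement I fix $q\in\dom X$ and $v\in X(q)$; the hypothesis $u^k\in X^{\epsilon_k}(p^k)$ reads
\[
\langle P_{p^kq}u^k-v,\exp_q^{-1}p^k\rangle\geq-\epsilon_k,\qquad k=0,1,\dots.
\]
By Lemma~\ref{eq:ContExp}(i) we have $\exp_q^{-1}p^k\to\exp_q^{-1}\overline{p}$, by Lemma~\ref{eq:ContExp}(ii) $\overline{u}\in T_{\overline{p}}M$, and $-\epsilon_k\to-\overline{\epsilon}$. Granting $P_{p^kq}u^k\to P_{\overline{p}q}\overline{u}$, the left-hand side converges to $\langle P_{\overline{p}q}\overline{u}-v,\exp_q^{-1}\overline{p}\rangle$, so passing to the limit preserves the inequality and yields $\langle P_{\overline{p}q}\overline{u}-v,\exp_q^{-1}\overline{p}\rangle\geq-\overline{\epsilon}$; since $q,v$ are arbitrary, $\overline{u}\in X^{\overline{\epsilon}}(\overline{p})$.

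The step I expect to be the main obstacle is precisely $P_{p^kq}u^k\to P_{\overline{p}q}\overline{u}$, which couples a moving base point with a moving vector. I would control it through
\[
\|P_{p^kq}u^k-P_{\overline{p}q}\overline{u}\|\leq\|u^k-P_{\overline{p}p^k}\overline{u}\|+\|P_{p^kq}P_{\overline{p}p^k}\overline{u}-P_{\overline{p}q}\overline{u}\|,
\]
where the first bound uses that $P_{p^kq}$ is an isometry. The first term tends to $0$ since $u^k\to\overline{u}$ and, by Lemma~\ref{eq:ContExp}(iii), $P_{\overline{p}p^k}\overline{u}\to P_{\overline{p}\overline{p}}\overline{u}=\overline{u}$; the second term tends to $0$ by the continuous dependence of parallel transport on its base point as $p^k\to\overline{p}$, a standard property of the Levi-Civita connection that also underlies Lemma~\ref{Parallel transport}. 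This is where completeness and nonpositive curvature enter, guaranteeing that $P_{pq}$ is well defined and jointly continuous in its endpoints.
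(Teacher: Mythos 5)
The paper does not actually prove this proposition: the proof environment consists of the single line ``See [Proposition 2.3] of Batista--Bento--Ferreira'', so there is no in-paper argument to compare yours against, and your submission should be judged on its own. On that basis it is correct, and it is the standard direct verification one would expect the cited reference to contain: items (i), (ii), (iv), (v) and (vi) are exactly the one-line manipulations of the defining inequality that you give, (iii) is the observation that $X^{\epsilon}(p)$ is an intersection of closed half-spaces of $T_pM$, and your reduction of the closing limit statement to the convergence $P_{p^kq}u^k\to P_{\bar{p}q}\bar{u}$ correctly isolates the only genuinely Riemannian ingredient, namely the joint continuity of parallel transport in its endpoints and its argument --- the same fact the paper itself relies on in Lemma~\ref{eq:ContExp}(iii) and Lemma~\ref{Parallel transport}, so invoking it as standard is legitimate here. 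Two minor loose ends, neither fatal: in (iv) the case $\alpha=0$ is not quite ``checked directly'' --- showing $(0\cdot X)^{0}(p)=\{0\}$ requires taking $q=\exp_p(tu)$ and noting $\langle P_{pq}u,\exp_q^{-1}p\rangle=-t\|u\|^2$, which implicitly uses that $\operatorname{dom}X$ is all of $M$ (the standing assumption in this paper, but not stated in the proposition); and in (vi) one should assume $E\neq\varnothing$ for $\inf E$ to make sense.
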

\begin{proof}
See \cite[ Proposition 2.3]{BatistaBentoFerreira2015-2}.
\end{proof}
In the next definition we extend the notion of  lower semicontinuity of a multivalued operator, which has been introduced in \cite{BurachikIusemSvaiter1997},  to a vector field.
\begin{definition}
A multivalued vector field  $Y:M\rightrightarrows TM$ is said to be  lower semicontinuous at $\bar{p}\in \emph{dom}Y $ if,  for each sequence   $\{p^k\}\subset \emph{dom}Y$ such that  $\lim_{k\to + \infty}p^k=\bar{p}$ and each  $\bar{u}\in Y(\bar{p})$, there exists a sequence $\{w^k\}$ such that $w^k\in Y(p^k)$ and $\lim_{k\rightarrow\infty}P_{p^k\bar{p}}w^k=\bar{u}$.
\end{definition}
The next result is a generalization of  \cite[Theorem 4.1]{IusemPerez2000}, it will  play a  important rule  in the convergence analysis of the extragradient method.
\begin{theorem} \label{Theoremlsc}
Let $X:M\rightrightarrows TM$ be a maximal monotone vector field and $\epsilon>0$.  If  $\emph{dom}X=M$  then $X^{\epsilon}$ is lower semicontinuous.
\end{theorem}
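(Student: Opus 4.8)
The plan is to prove lower semicontinuity of $X^\epsilon$ by following the strategy of Iusem--P\'erez \cite{IusemPerez2000}, adapting each step to the Hadamard-manifold setting using the tools already assembled in the excerpt. Fix $\bar p\in M$ (note $\dom X = M$ so $\dom X^\epsilon = M$ by Proposition~\ref{prop.elem.ii}), a sequence $p^k\to\bar p$, and an element $\bar u\in X^\epsilon(\bar p)$. I must produce $w^k\in X^\epsilon(p^k)$ with $P_{p^k\bar p}w^k\to\bar u$. The natural first guess is to transport $\bar u$ directly, i.e.\ to test whether $P_{\bar p p^k}\bar u$ already lies in $X^\epsilon(p^k)$; the obstacle is that the defining inequality in \eqref{enl.X} holds for $\bar u$ at $\bar p$ with slack $-\epsilon$, but after transporting to $p^k$ the inequality may be violated by a small amount, so one cannot simply take $w^k = P_{\bar p p^k}\bar u$.

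**The key idea** is to relax $\epsilon$ slightly and then contract back. First I would observe that for any $\epsilon' > \epsilon$, the transported vector $P_{\bar p p^k}\bar u$ satisfies the $\epsilon'$-enlargement inequality at $p^k$ for all large $k$: this is a continuity argument, quantifying how the quantity $\langle P_{p^k q}(P_{\bar p p^k}\bar u) - v,\ \exp_{q}^{-1}p^k\rangle$ deviates from its counterpart at $\bar p$ as $k\to\infty$, using Lemma~\ref{eq:ContExp}, Lemma~\ref{Parallel transport}, and the fact that $X^\epsilon$ (hence the relevant test vectors $v$, via local boundedness from $\dom X = M$) is bounded on bounded sets. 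The perturbation in the inner product is controlled uniformly over $q$ in a bounded region, and the genuinely problematic $q$ far from $\bar p$ are handled by the growth of $\exp_q^{-1}$ together with monotonicity; this gives $P_{\bar p p^k}\bar u \in X^{\epsilon'}(p^k)$ eventually.

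**The second ingredient** is to move from membership in the larger enlargement $X^{\epsilon'}$ back to the target $X^\epsilon$. Here I would exploit the convexity of the enlargement sets (Proposition~\ref{prop.elem.X}(iii)) together with the transporter structure: picking a fixed element $u_0\in X(p^k)\subset X^\epsilon(p^k)$ (nonempty since $\dom X = M$) and forming a geodesic/convex combination in $T_{p^k}M$ between $P_{\bar p p^k}\bar u$ and $u_0$, one lands inside $X^\epsilon(p^k)$ for a suitable combination parameter, by an intermediate-value argument on the slack. Choosing the combination parameter $\to 0$ as $\epsilon'\to\epsilon$ (equivalently as $k\to\infty$), I would define $w^k$ to be this combination, so that $w^k\in X^\epsilon(p^k)$ and $P_{p^k\bar p}w^k\to \bar u$, since $P_{p^k\bar p}P_{\bar p p^k}\bar u = \bar u$ and the correction term vanishes.

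**I expect the main obstacle** to be the uniformity in the continuity estimate of the second paragraph: the test set $q\in\dom X = M$ is unbounded, so one cannot naively pass the perturbation bound under the supremum. The resolution, as in \cite{IusemPerez2000}, is to split the argument, treating $q$ in a fixed large ball by uniform continuity of parallel transport and the exponential map (Lemmas~\ref{eq:ContExp} and \ref{Parallel transport}) and handling the complementary region via the monotonicity inequality \eqref{eq2.1} and the coercive growth of $d(p^k,q)$, which forces the inner product to be large and positive there regardless of the small perturbation. Assembling these two regimes to obtain a single $k$-independent threshold is the technically delicate part of the proof.
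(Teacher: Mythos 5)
Your proposal follows essentially the same route as the paper's proof: both form the convex combination $(1-\theta)P_{\bar p p^k}\bar u+\theta u^k$ with $u^k\in X(p^k)$, use monotonicity to make the $u^k$-term nonnegative and the $\epsilon$-inequality at $\bar p$ (with a small slack absorbed by the weight $1-\theta$) to place the combination in $X^\epsilon(p^k)$, and then let the combination parameter tend to zero. The only minor differences are that the paper extracts $w^k$ as the nearest point of $X^\epsilon(p^k)$ to the transported $\bar u$ and concludes by contradiction rather than by your direct diagonal choice of the parameter, and that the uniformity over unbounded $q\in\operatorname{dom}X$, which you rightly flag as the delicate step, is in fact passed over silently in the paper's own argument.
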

\begin{proof}
Since $\mbox{dom}X=M$, thus   Proposition~\ref{prop.elem.ii} implies $\mbox{dom}X=\mbox{dom}X^{\epsilon}$. Take $\{p^k\}\subset M$ such that  $\lim_{k\to + \infty}p^k=\bar{p}$ and  $\bar{u}\in X^{\epsilon}(\bar{p})$.  First, we  prove that the following statements there hold:
\begin{enumerate}
\item[(i)] For each $0<\theta<1$ and $u^k\in X(p^k)$, there exists $k_0\in\mathbb{N}$ such that $(1-\theta)P_{\bar{p}p^k}\bar{u}+\theta u^k\in X^{\epsilon}(p^k)$, for all  $k>k_0$;
\item[(ii)] Take $\nu>0$.  Then, there exist $k_0\in\mathbb{N}$ and $v_k\in X^{\epsilon}(p^k)$ such that $\| \bar{u}-P_{p^k\bar{p}}v_k\|\leq\nu$, for all  $k>k_0$.
\end{enumerate}
For proving (i),  take $q\in M$ and $v\in X(q)$. Then,  simple algebraic manipulations yield
\begin{multline*}
 \left\langle P_{p^kq}[(1-\theta)P_{\bar{p}p^k}\bar{u}+\theta u^k]-v, ~ \exp^{-1}_q{p^k}\right\rangle=\\(1-\theta) \left\langle P_{\bar{p}q}\bar{u}-v,\exp^{-1}_{q}p^k\right\rangle+\theta\left\langle P_{p^kq}u^k-v,\exp^{-1}_{q}p^k\right\rangle.
\end{multline*}
Since $X$ is monotone, the second term in the right hand said of the last inequality is positive. Thus,
\begin{equation}\label{lemma_lsc_i}
\left\langle P_{p^kq}[(1-\theta)P_{\bar{p}p^k}\bar{u}+\theta u^k]-v, ~ \exp^{-1}_q{p^k}\right\rangle \geq (1-\theta) \left\langle P_{\bar{p}q}\bar{u}-v,\exp^{-1}_{q}p^k\right\rangle.
\end{equation}
Considering that $\bar{u}\in X^{\epsilon}(\bar{p})$ and  $\lim_{k\rightarrow\infty} \left\langle P_{\bar{p}q}\bar{u}-v,\exp^{-1}_{q}p^k\right\rangle= \left\langle P_{\bar{p}q}\bar{u}-v,\exp^{-1}_{q}\bar{p}\right\rangle\geq-\epsilon$  then,  for all $\delta>0$, there exists $k_0\in\mathbb{N}$ such that
\begin{equation}\label{lemma_lsc_ii}
 \left\langle P_{\bar{p}q}\bar{u}-v,\exp^{-1}_{q}p^k\right\rangle\geq-\epsilon-\delta,  \qquad  k >k_0.
\end{equation}
Combining (\ref{lemma_lsc_i}) and (\ref{lemma_lsc_ii}) and taking $\delta=\theta\epsilon/(1-\theta)$  we  conclude that
\begin{eqnarray*}
\left\langle P_{p^kq}[(1-\theta)P_{\bar{p}p^k}\bar{u}+\theta u^k]-v, ~ \exp^{-1}_q{p^k}\right\rangle \geq-\epsilon,   \qquad  k >k_0,
\end{eqnarray*}
which proof the item (i).  For proving the item (ii),  take $\eta>0$ and consider  the following   constants:
$$
 \sigma:=\sup\left\{\parallel u\parallel: u\in X^{\epsilon}(B(\bar{p},\eta))\right\}, \qquad \gamma:=\mbox{min}\{(\epsilon/2\sigma),\eta\}, \qquad 0<\mu <\mbox{min}\{1,(\nu/2\sigma)\}.
$$
Take any $u^k\in X(p^k)$.   Applying  item (i)  with $\theta=\mu$, we conclude that there exists $k_0\in \mathbb{N}$ such that $(1-\mu)P_{\bar{p}p^k}\bar{u}+\mu u^k\in X^{\epsilon}(p^k)$, for all $k\geq k_0$. We shall to  prove that,  taking $v_k=(1-\mu)P_{\bar{p}p^k}\bar{u}+\mu u^k$,  we have $\parallel\| \bar{u}-P_{p^k\bar{p}}v_k\|\parallel\leq \nu$,  for all  $k\geq k_0$. First note that,  some manipulation and taking into account that  the parallel transport is an isometry we have
\begin{equation}\label{est. bar{u}-u^k}
\| \bar{u}-P_{p^k\bar{p}}v_k\|=\mu\parallel \bar{u}-P_{{p^k}\bar{p}}u^k\parallel\leq\mu(\parallel \bar{u}\parallel+\parallel u^k\parallel), \qquad k\geq k_0.
\end{equation}
Since $\lim_{k\to + \infty}p^k=\bar{p}$, there exist   $k_0$ such that $p^k\in B(\bar{p},\gamma)$,  for all $k\geq k_0$.  Thus, taking into account that
$u^k\in X(p^k)\subset X^{\epsilon}(p^k)$ and $B(\bar{p},\gamma)\subset B(\bar{p},\eta)$, the definition of $\sigma$  gives  $\parallel u^k\parallel\leq\sigma$,  for all $k\geq k_0$.  Due to   $\bar{u}\in X^{\epsilon}(\bar{p})$ we also have  $\parallel \bar{u}\parallel\leq\sigma.$ Therefore, using \eqref{est. bar{u}-u^k} and the definition of $\mu$ we obtain
\begin{equation}
\parallel \bar{u}-v_k\parallel\leq2\sigma\mu\leq\nu, \qquad  k\geq k_0,
\end{equation}
and the proof of item (ii) is proved. Finally,  we define the sequence $\{w^k\}$  as follows $w^k:=\argmin \{\|\bar{u}-P_{p^k\bar{p}}u \| ~ : ~ u\in X^{\epsilon}(p^k)\}$  for each $k$.  Since, for each $k$ the set $X^{\epsilon}(p^k)$ is closed and convex, the sequence $\{w^k\}$ is well defined.  We claim that $\lim_{k\rightarrow\infty}P_{p^k\bar{p}}w^k=\bar{u}$. Otherwise there exists  $\{p^{k_j}\}$,   a subsequence of $\{p^k\}$,  and some $\nu>0$ such that $\parallel \bar{u}-P_{p^{k_j}\bar{p}}w^{k_j}\parallel>\nu$ for all $j$.  Definition of the sequence  $\{w^k\}$ implies that  $\parallel \bar{u}- P_{p^{k_j}\bar{p}}u\parallel>\nu$ for all $u\in X^{\epsilon}(p^{k_j})$ and all $j$.  On the other hand,  considering  that  $\lim_{k_j\to + \infty}p^{k_j}=\bar{p}$,  $\bar{u}\in X^{\epsilon}(\bar{p})$  and the item (ii) holds,  for all $\nu>0$, we have a contraction. Therefore,  the claim is proven and  the proof is concluded.
\end{proof}
\begin{remark}
 It is well known that the  lower semicontinuity of operators  is an  important property which is useful to obtain convergence results of iterative process in the  Euclidean space. We also remark that, in general, maximal monotone operator  are not  lower semicontinuous; see \cite[Section~2]{IusemPerez2000}. The result in the above theorem establishes  this property to vector fields and  it will be used,  in the next section,   to prove the convergence of the extragradient-type algorithm for variational inequality in Riemannian manifolds.
\end{remark}
\section{ An extragradient-type algorithm for variational inequality} \label{sec4}
In this section,  we introduce  an  extragradient-type algorithm for variational inequalities problem in Hadamard manifolds. It is worth pointing out that,  the variational inequality problem  was first  introduced in \cite{Nemeth2003},  for  single-valued  vector  fields  on Hadamard manifolds,   and   in \cite{LiYao2012} for  multivalued  vector  fields in  general Riemannian manifolds.   Let $X:M\rightrightarrows TM$ be a multivalued vector field and $ \Omega \subset M$ be a nonempty set.  The {\it variational inequality problem} $\mbox{VIP}(X,\Omega)$ consists of finding $p^*\in\Omega$ such that there exists $u^*\in X(p^*)$ satisfying
$$
\langle u^*,\,\exp^{-1}_{p^*}q\rangle\geq0, \qquad   q\in\Omega.
$$
Denote by $S^*(X, \Omega)$ {\it  the solution} set of $\mbox{VIP}(X,\Omega)$. From  the definition of normal  cone, we  can show  that  $\mbox{VIP}(X,\Omega)$  becomes the problem of finding $p^*\in\Omega$ such that
\begin{equation} \label{eq.vip}
0\in X(p^*)+N_{\Omega}(p^*).
\end{equation}
Note that,   if  $\Omega=M$ then  $N_{\Omega}(p)=\{0\}$. Hence,  VIP(X,$\Omega$) becomes   the problem of   finding $p^*\in\Omega$ such that $0\in X(p^*).$ This particular instance has been studied in several  papers, including \cite{FerreiraPerezNemeth2005, LiLopesMartin-Marquez2009}.
\begin{lemma} \label{solution_condition}
The following statements are equivalent:
\begin{enumerate}
\item[i)] $p^*$ is a solution of $\emph{VIP}(X,\Omega)$;
\item[ii)] There exists $u^* \in X(p^*)$ such that $p^*=P_{\Omega}(\exp_{p^*}(-\alpha u^*))$,  for some $\alpha>0$.
\end{enumerate}
\end{lemma}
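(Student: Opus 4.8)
The plan is to prove the equivalence by establishing the two implications, using the projection inequality \eqref{eq:proj} and the definition of the normal cone \eqref{eq:nc}. The overall strategy is to convert the variational inequality condition into a statement about the normal cone and then recognize the normal-cone membership as exactly the optimality condition for the metric projection. I would first record the useful observation that, by the definition of $\mathrm{VIP}(X,\Omega)$ and the normal cone in \eqref{eq:nc}, the point $p^*$ solves $\mathrm{VIP}(X,\Omega)$ if and only if there exists $u^*\in X(p^*)$ with $-u^*\in N_{\Omega}(p^*)$; this is the reformulation \eqref{eq.vip} and it is the common pivot for both directions.

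For the implication (i)$\Rightarrow$(ii), I would start from $u^*\in X(p^*)$ satisfying $\langle u^*,\exp^{-1}_{p^*}q\rangle\ge 0$ for all $q\in\Omega$, which says $-u^*\in N_{\Omega}(p^*)$. Fix any $\alpha>0$ and set $p:=\exp_{p^*}(-\alpha u^*)$, so that $\exp^{-1}_{p^*}p=-\alpha u^*$. I then want to show $p^*=P_{\Omega}(p)$. Using the projection characterization \eqref{eq:proj}, it suffices to verify $\langle \exp^{-1}_{p^*}q,\exp^{-1}_{p^*}p\rangle\le 0$ for all $q\in\Omega$, since \eqref{eq:proj} characterizes the projection point for convex $\Omega$. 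But $\langle \exp^{-1}_{p^*}q,\exp^{-1}_{p^*}p\rangle=\langle \exp^{-1}_{p^*}q,-\alpha u^*\rangle=-\alpha\langle u^*,\exp^{-1}_{p^*}q\rangle\le 0$ by the assumed inequality, and $p^*\in\Omega$, so $p^*$ is indeed the projection of $p$ onto $\Omega$.

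For the converse (ii)$\Rightarrow$(i), I would assume there exist $u^*\in X(p^*)$ and $\alpha>0$ with $p^*=P_{\Omega}(\exp_{p^*}(-\alpha u^*))$. Writing $p:=\exp_{p^*}(-\alpha u^*)$ and applying the projection inequality \eqref{eq:proj} at $P_{\Omega}(p)=p^*$ gives $\langle \exp^{-1}_{p^*}q,\exp^{-1}_{p^*}p\rangle\le 0$ for all $q\in\Omega$. Substituting $\exp^{-1}_{p^*}p=-\alpha u^*$ and dividing by the positive constant $\alpha$ yields $\langle u^*,\exp^{-1}_{p^*}q\rangle\ge 0$ for all $q\in\Omega$, which is exactly the defining condition that $p^*$ solves $\mathrm{VIP}(X,\Omega)$.

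The argument is essentially a Riemannian transcription of the classical Euclidean fact, so no single step is a serious obstacle; the only point requiring care is the consistent use of $\exp^{-1}_{p^*}p=-\alpha u^*$ and the sign conventions in \eqref{eq:proj} and \eqref{eq:nc}, together with verifying that the inner-product inequality from \eqref{eq:proj} is both necessary and sufficient for $p^*$ to be the metric projection of $p$ when $\Omega$ is convex. I would treat the sufficiency of \eqref{eq:proj} as already available from \cite{FerreiraOliveira2002}, so that the two implications close cleanly.
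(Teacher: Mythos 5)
Your argument is correct and follows exactly the route the paper intends: the paper's entire proof is the single sentence that the lemma ``is an immediate consequence of the inequality \eqref{eq:proj}'', and your two implications are just that consequence written out, with the only point of care (the sufficiency of \eqref{eq:proj} for characterizing the projection, available from \cite{FerreiraOliveira2002}) correctly identified.
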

\begin{proof}
It is an immediate consequence of the inequality   \eqref{eq:proj}.
\end{proof}
\noindent
To analyze the extragradient  algorithm   we need  of the following   three assumptions:
\begin{itemize}
\item[{\bf A1.}] $\dom X=M$ and $\Omega\subset M$ is closed and convex;
\item[{\bf A2.}] $X$ is maximal monotone;
\item[{\bf A3.}] $S^*(X, \Omega) \neq\varnothing$.
\end{itemize}
We also need of the following   assumption, which  plays an important rule in the convergence  analyses of the our extragradient  algorithm in Hadamard manifolds.
\begin{itemize}
\item[{\bf A4.}] For each $y\in M$ and   $v\in T_yM$   the   following set  is convex
\begin{equation}\label{def.S}
S:=\left\{x\in M:\langle v,\exp^{-1}_yx\rangle\leq0\right\}.
\end{equation}
\end{itemize}
\begin{remark}
If the manifold  $M$ is the Euclidean space then    $S$ in  \eqref{def.S}  is a closed  semi-space.  In  \cite{FerreiraPerezNemeth2005} has been shown that for Hadamard manifolds with constant  curvature or two dimensional  the set $S$  is convex. However,  so far  it not known if  $S$ is or not convex  in  general Hadamard manifolds.
\end{remark}
Next, we present  an{ \it  extragradient-type algorithm  for finding a solution of  $\mbox{VIP}(X,\Omega)$}.

\noindent

\hrule
\noindent
\\
{\bf  Extragradient-type algorithm\\} \label{EA}
\hrule
\vspace{1mm}
\noindent
Our algorithm  requires six exogenous constants:
$$
\epsilon>0,\qquad  0<\delta_-<\delta_+<1,\qquad  0<\alpha_-<\alpha_+,\qquad  0<\beta<1,
$$
and two exogenous sequences $\{\alpha_k\}$ and $\{\beta_k\}$ satisfying the fowling conditions:
$$
\alpha_k\in[\alpha^-, ~\alpha^+],\qquad  \beta_k\in[\beta,~1], \qquad k=0,1, \ldots.
$$
{\bf 1.} {\sc Initialization: } $p^0\in \Omega$, \qquad $\epsilon_0=\epsilon$.\\
{\bf 2.} {\sc  Iterative step: }Given $p^k$ and $\epsilon_k$,
\begin{itemize}
\item[\bf{(a)}] \emph{ Selection of $u^k$: } Find
\begin{equation}\label{sel.uk.i}
u^k\in X^{\epsilon_k}(p^k),
\end{equation}
such that
\begin{equation}\label{sel.uk.ii}
\left\langle w,-\exp^{-1}_{p^k}P_{\Omega}(\exp_{p^k}(-\alpha_ku^k)) \right\rangle \geq \frac{\delta_+}{\alpha_k}d^2\left(p^{k}, P_{\Omega}(\exp_{p^k}(-\alpha_ku^k))\right),  \quad  w\in X^{\epsilon_k}(p^k).
\end{equation}
Define,
\begin{equation}\label{sel.uk.iii}
z^k:=P_{\Omega}\left(\exp_{p^k}(-\alpha_ku^k)\right).
\end{equation}
\item[\bf{(b)}]  \emph{Stopping criterion:} If $p^k=z^k$ then stop. Otherwise,
\item[\bf{(c)}]  \emph{ Selection of $\lambda_k$ and $v^k$: } Define $\gamma_k(t):=\exp_{p^k}t\exp^{-1}_{p^k}z^k$ and let
\begin{equation}\label{ik}
i(k):=\mbox{min}\left\{i\geq0~:~  \exists \; v^{k,i}\in X(y^{k,i}), ~\left\langle v^{k,i},\gamma'_k(2^{-i}\beta_k) \right\rangle \leq -\frac{\delta_-}{\alpha_k}d^2(p^{k},z^{k})\right\},
\end{equation}
where \begin{equation}\label{yki}
y^{k, i}:=\gamma_k\left(2^{-i}\beta_k\right).
\end{equation}
Define
\begin{align}\label{lambda_k}
 y^{k}&:=\exp_{p^k} \lambda_k \exp^{-1}_{p^k}z^k,   \qquad  \qquad  \lambda_k:=2^{-i(k)}\beta_k, \\
\quad v^k &:= v^{k,i(k)}.  \label{v^k}
\end{align}
\item[\bf{(d)}] \emph{ Definition of $p^{k+1}$ and  $\epsilon_{k+1}$:} Define
\begin{align}\label{S_k}
q^k &:=P_{S_k}(p^k),  \qquad \qquad S_k:=\{p\in M:\langle v^k,\exp^{-1}_{y^k}p\rangle\leq0\},\\
p^{k+1} & :=P_{\Omega}(q^k), \qquad  \qquad  \epsilon_{k+1}:=\mbox{min} \left\{\epsilon_{k}, ~d^2(p^{k},z^{k})\right\},  \label{def.p_k}
\end{align}
set $k\leftarrow k+1$ and go to {\sc  Iterative step }.
\end{itemize}
\hrule
\noindent
\vspace{0.005cm}
\begin{remark}
Assuming that $X$ is an univalued monotone vector field and $\epsilon_k=0$,  for all $k$,  the previous algorithm becomes the algorithm presented in \cite{TangHuang2012} (see also, \cite{FerreiraPerezNemeth2005})  and moreover, in the  particular case where  $M=\mathbb{R}^n$,   it merges  into the extragrdient algorithm presented in  \cite{IusemPerez2000}.
\end{remark}
Next result establishes  the well-definedness of the above   algorithm.
\begin{lemma}\label{Boa_def.ii}
The following statements  hold:
\begin{enumerate}
\item[\emph{(i)}] $p^k\in\Omega$, for all $k$;
\item[\emph{(ii)}] there exists $u^k$ satisfying (\ref{sel.uk.i}) and  (\ref{sel.uk.ii}),  for each $k$;
\item[\emph{(iii)}] if $p^k\neq z^k$,  then $i(k)$ is well defined.
\end{enumerate}
\end{lemma}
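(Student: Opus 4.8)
I would treat the three items separately, the second being the heart of the matter. Item (i) is a one-line induction: $p^0\in\Omega$ by initialization and, assuming $p^k\in\Omega$, the point $p^{k+1}=P_\Omega(q^k)$ lies in $\Omega$ because the metric projection onto the closed convex set $\Omega$ (Assumption A1) takes values in $\Omega$. For (ii), fix $k$ and set $K:=X^{\epsilon_k}(p^k)$. Since $\dom X=M$, Proposition~\ref{prop.elem.ii} gives $\dom X^{\epsilon_k}=M$ (so $K\neq\varnothing$) and that $X^{\epsilon_k}$ is bounded on bounded sets (so $K$ is bounded), while Proposition~\ref{prop.elem.X}(iii) shows $K$ is closed and convex; hence $K$ is a nonempty compact convex subset of the Euclidean space $T_{p^k}M$. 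For $u\in K$ write $z(u):=P_\Omega(\exp_{p^k}(-\alpha_k u))$ and define the residual $r(u):=-\exp^{-1}_{p^k}z(u)$, so that $\|r(u)\|=d(p^k,z(u))$ and $u\mapsto r(u)$ is continuous by Lemma~\ref{eq:ContExp} and the nonexpansiveness \eqref{eq:projnon-expansive} of $P_\Omega$.

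The geometric core of (ii) is the diagonal estimate $\langle u,r(u)\rangle\ge\frac{1}{\alpha_k}d^2(p^k,z(u))$ for every $u\in K$. To establish it I would set $\tilde p:=\exp_{p^k}(-\alpha_k u)$, so that $\exp^{-1}_{p^k}\tilde p=-\alpha_k u$ and therefore $\langle u,r(u)\rangle=\frac{1}{\alpha_k}\langle\exp^{-1}_{p^k}\tilde p,\exp^{-1}_{p^k}z(u)\rangle$. Applying the projection inequality \eqref{eq:proj} with projected point $\tilde p$ and $q=p^k\in\Omega$ (this is where (i) is used) gives $\langle\exp^{-1}_{z(u)}p^k,\exp^{-1}_{z(u)}\tilde p\rangle\le0$, and feeding $(p_1,p_2,p_3)=(\tilde p,p^k,z(u))$ into the law of cosines \eqref{eq:coslaw2} then yields $\langle\exp^{-1}_{p^k}\tilde p,\exp^{-1}_{p^k}z(u)\rangle\ge d^2(p^k,z(u))$; dividing by $\alpha_k$ gives the estimate. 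This is the step where the manifold geometry is absorbed through \eqref{eq:coslaw2} rather than through an elementary Euclidean expansion, and I expect it to be the most delicate point of the whole lemma.

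With the diagonal estimate available, the genuine subtlety of (ii) is that one cannot write $u^k$ down explicitly: a single $u^k$ must satisfy \eqref{sel.uk.ii} against \emph{every} $w\in K$, which forces a minimax argument. Define $\varphi:K\times K\to\mathbb R$ by $\varphi(w,u):=\frac{\delta_+}{\alpha_k}\|r(u)\|^2-\langle w,r(u)\rangle$. For each fixed $u$ the map $w\mapsto\varphi(w,u)$ is affine, hence concave; for each fixed $w$ the map $u\mapsto\varphi(w,u)$ is continuous, hence lower semicontinuous; and on the diagonal $\varphi(w,w)\le\frac{\delta_+-1}{\alpha_k}\|r(w)\|^2\le0$ because $\delta_+<1$ and $\|r(w)\|^2=d^2(p^k,z(w))$. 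Since $K$ is nonempty compact convex, Ky Fan's minimax inequality produces $u^k\in K$ with $\varphi(w,u^k)\le0$ for all $w\in K$; recalling $r(u^k)=-\exp^{-1}_{p^k}z^k$ with $z^k=z(u^k)$ as in \eqref{sel.uk.iii}, this is precisely \eqref{sel.uk.ii}, while $u^k\in K$ is \eqref{sel.uk.i}, proving (ii).

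For (iii) I would assume $p^k\neq z^k$, so $d^2(p^k,z^k)>0$, and argue by contradiction: suppose that for every $i\ge0$ and every $v\in X(y^{k,i})$ one has $\langle v,\gamma_k'(2^{-i}\beta_k)\rangle>-\frac{\delta_-}{\alpha_k}d^2(p^k,z^k)$. Pick $v^{k,i}\in X(y^{k,i})$ (nonempty since $\dom X=M$). As $i\to\infty$ we have $2^{-i}\beta_k\to0$ and $y^{k,i}=\gamma_k(2^{-i}\beta_k)\to p^k$; since a maximal monotone field with full domain is locally bounded, $\{v^{k,i}\}$ is bounded and, along a subsequence, $P_{y^{k,i}p^k}v^{k,i}\to\bar v$. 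Because $\gamma_k$ is a geodesic its velocity is parallel along itself, so $P_{y^{k,i}p^k}\gamma_k'(2^{-i}\beta_k)=\gamma_k'(0)=\exp^{-1}_{p^k}z^k$, whence $\langle v^{k,i},\gamma_k'(2^{-i}\beta_k)\rangle=\langle P_{y^{k,i}p^k}v^{k,i},\exp^{-1}_{p^k}z^k\rangle$. Passing to the limit in the contradiction hypothesis gives $\langle\bar v,\exp^{-1}_{p^k}z^k\rangle\ge-\frac{\delta_-}{\alpha_k}d^2(p^k,z^k)$, while upper Kuratowski semicontinuity of $X$ together with closedness of its values yields $\bar v\in X(p^k)=X^0(p^k)\subset X^{\epsilon_k}(p^k)$ by Proposition~\ref{prop.elem.ii} and Proposition~\ref{prop.elem.X}(i). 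Using $\bar v$ as the test vector in \eqref{sel.uk.ii} gives $\langle\bar v,\exp^{-1}_{p^k}z^k\rangle\le-\frac{\delta_+}{\alpha_k}d^2(p^k,z^k)$, and since $d^2(p^k,z^k)>0$ the two inequalities force $\delta_+\le\delta_-$, contradicting $\delta_-<\delta_+$. Hence the index set in \eqref{ik} is a nonempty subset of $\mathbb N$ and $i(k)$, its smallest element, is well defined.
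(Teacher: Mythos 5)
Your proposal is correct and follows essentially the same route as the paper: item (i) by induction from the initialization and \eqref{def.p_k}, item (ii) via a Ky Fan/equilibrium-existence argument on the nonempty compact convex set $X^{\epsilon_k}(p^k)$ combined with the projection inequality \eqref{eq:proj} and the law of cosines \eqref{eq:coslaw2}, and item (iii) by contradiction using local boundedness and the graph-closedness of the maximal monotone field. The only cosmetic difference is that you fold the diagonal estimate into the bifunction handed to Ky Fan, whereas the paper first extracts an equilibrium point of $f_k(u,v)=\langle\exp^{-1}_{p^k}z(u),\,u-v\rangle$ and then applies the same geometric estimate to that point.
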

\begin{proof} The proof of  item $(i)$ follows  from the initialization step and    \eqref{def.p_k}.
\noindent
For proving  item $(ii)$ define,  for each $k$,  the bifunction   $f_k:T_pM\times T_pM\rightarrow \mathbb{R}$ by
\begin{equation} \label{eq:afim}
f_k(u,v)=\left\langle \exp^{-1}_{p^k}z^k,~u-v\right\rangle.
\end{equation}
 In view of  Proposition  \ref{prop.elem.ii} and  item iii of  Proposition \ref{prop.elem.X}  we have $X^{\epsilon_k}(p^k)$ compact and convex.  Hence,  applying    \cite[Basic Existence Theorem on page 3]{GiancarloCastellaniPappalardoPassacantando2013} (see also \cite[Theorem~3.1]{BatistaBentoFerreira2015}) with $C=X^{\epsilon_k}(p^k)$ and $f=f_k$,  we conclude that there exists  $u^k\in X^{\epsilon_k}(p^k)$ such that $f_k(u^k,v)\geq 0$ for all $v\in  X^{\epsilon_k}(p^k)$,  and from  \eqref{eq:afim} we obtain
\begin{equation}\label{e.p.}
\langle \exp^{-1}_{p^k}z^k,u^k\rangle\geq\langle \exp^{-1}_{p^k}z^k, v\rangle, \qquad v\in  X^{\epsilon_k}(p^k).
\end{equation}
On the other hand,   using  inequality    \eqref{eq:coslaw2} with $p_1=q$, $p_2=p^k$ and $p_3=z^k$  we have
$$
\langle \exp^{-1}_{p^k}q, \, \exp^{-1}_{p^k}{z^k}\rangle+\langle \exp^{-1}_{z^k}q, \, \exp^{-1}_{z^k}p^k\rangle \geq d^2(p^k, z^k),
$$
where $q=\exp_{p^k}(-\alpha_ku^k/\delta_+ )$.  Since  $z^k=P_{\Omega}(q)$,  it follows  from  \eqref{eq:proj} and the  last inequality that
$$
\langle \exp^{-1}_{p^k}q,\exp^{-1}_{p^k}z^k\rangle\geq d^2(p^k,z^k).
$$
Thus, considering that $q=\exp_{p^k}(-\alpha_ku^k/\delta_+ )$, we have $\langle u^k,\exp^{-1}_{p^k}z^k\rangle\leq - \delta_+d^2(p^k,z^k)/\alpha_k$ which,  combined with \eqref{e.p.},  gives the desired result.

\noindent
To prove  item (iii)  we proceed by contradiction.  Fix $k$  and  assume that, for each $i$, there holds
\begin{equation}\label{itemiii}
\left\langle v^{k,i},~\gamma_k'(2^{-i}\beta_k)\right\rangle>-\frac{\delta_-}{\alpha_k}d^2(p^k,z^k), \qquad v^{k,i} \in  X(y^{k,i}).
\end{equation}
First note that, from \eqref{yki}  we have    $\{y^{k,i}\}$ belongs to the geodesic segment joining $p_k$ to $\gamma_{k}(\beta_k)$. Thus   $\{y^{k,i}\}$ is a bounded sequence and, consequently,     \cite[Proposition~3.5]{LiLopesMartin-Marquez2009} implies that  $\{X(y^{k,i})\}$ is also a bounded. Considering that $v^{k,i}\in X(y^{k,i})$  for each $i$,   without loss of generality, we can  assume that  $\{v^{k,i}\}$ converges to  $\bar{v}$. Letting $i$ goes to infinity in \eqref{itemiii} and taking into account that   $\lim_{i\to \infty}v^{k,i}=\bar{v}$, $\gamma(0)=p^k$ and $\gamma'(0)=\exp^{-1}_{p^k}z^k$,  we conclude that
\begin{equation}\label{itemiii.2}
\left\langle \bar{v}, ~\exp^{-1}_{p^k}z^k\right\rangle\geq-\frac{\delta_-}{\alpha_k}d^2(p^k,z^k).
\end{equation}
On the other hand, since     $\lim_{i\rightarrow\infty}y^{k,i}=p^k$, $\lim_{i\to \infty}v^{k,i}=\bar{v}$ and  $v^{k,i} \in  X(y^{k,i})$  for each $i$,  using     \cite[Proposition~3.5]{LiLopesMartin-Marquez2009} we have $\bar{v}\in X(p^k)$. Thus,  combining Propositions \ref{prop.elem.ii} and \ref{prop.elem.X} (i) we obtain  $\bar{v}\in X^{\epsilon}(p^k)$. Hence,  using  \eqref{sel.uk.iii} and  taking  $w=\bar{v}$ the inequality  \eqref{sel.uk.ii} becomes
\begin{equation}\label{itemiii.3}
\langle \bar{v},\exp^{-1}_{p^k}z^k)\rangle\leq-\frac{\delta_+}{\alpha_k}d^2(p^k,z^k).
\end{equation}
Since $0<\delta^{-}<\delta^{+}$,  the inequalities  \eqref{itemiii.2} and \eqref{itemiii.3} imply that $d(p^k,z^k)=0$, which is a contradiction with  $p^k\neq z^k$. Therefore, $i(k)$ is well defined and the proof of the proposition is done.
\end{proof}
From now on, $\{p^k\}$, $\{q^k\}$, $\{y^k\}$, $\{z^k\}$,  $\{v^k\}$, $\{u^k\}$ and $\{\epsilon_k\}$ denote sequences generated by extragradient-type algorithm.  To prove the convergence of  $\{p^k\}$  to  a point of the solution set  $S^*(X, \Omega)$,   we need  some preliminaries  results.
\begin{lemma}\label{Fejer}
The sequence  $\{p^k\}$ is F\'ejer convergent to $S^*(X, \Omega)$ and $\lim_{k\rightarrow\infty}d(q^k,p^k)=0$.
\end{lemma}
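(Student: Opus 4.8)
The plan is to establish the Fejér convergence of $\{p^k\}$ to $S^*(X,\Omega)$ by tracking the squared distance $d^2(p^*, \cdot)$ along the three projection-type steps of the algorithm, showing that it cannot increase and in fact decreases by an amount controlled by $d^2(q^k,p^k)$. Fix an arbitrary solution $p^*\in S^*(X,\Omega)$. By Lemma~\ref{solution_condition}(ii) there is $u^*\in X(p^*)$ with $\langle u^*,\exp^{-1}_{p^*}q\rangle\geq 0$ for all $q\in\Omega$; equivalently $0\in X(p^*)+N_\Omega(p^*)$. The key point is that $p^*$ lies in the halfspace $S_k$ defined in \eqref{S_k}: indeed, from the line-search \eqref{ik}--\eqref{v^k} we have $v^k\in X(y^k)$ with the descent inequality $\langle v^k,\gamma'_k(\lambda_k)\rangle\leq -\tfrac{\delta_-}{\alpha_k}d^2(p^k,z^k)$, and since $X$ is monotone (Assumption A2) one compares $v^k\in X(y^k)$ with $u^*\in X(p^*)$ to deduce $\langle v^k,\exp^{-1}_{y^k}p^*\rangle\leq 0$, i.e.\ $p^*\in S_k$.

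**The chain of estimates** then proceeds in the order the points are generated. First, since $q^k=P_{S_k}(p^k)$ and the projection onto a convex set is firmly nonexpansive (in particular satisfies \eqref{eq:proj} and \eqref{eq:projnon-expansive}), and since $p^*\in S_k$, the cosine inequality \eqref{eq:coslaw} with $p_1=p^*$, $p_2=p^k$, $p_3=q^k$ combined with the projection characterization \eqref{eq:proj} gives
\begin{equation*}
d^2(p^*,q^k)\leq d^2(p^*,p^k)-d^2(q^k,p^k).
\end{equation*}
Second, since $p^{k+1}=P_\Omega(q^k)$ and $p^*\in\Omega$ (so $p^*$ is a feasible competitor for the projection), nonexpansiveness \eqref{eq:projnon-expansive} yields $d(p^*,p^{k+1})\leq d(p^*,q^k)$. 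Chaining these two steps produces
\begin{equation*}
d^2(p^*,p^{k+1})\leq d^2(p^*,p^k)-d^2(q^k,p^k),
\end{equation*}
which is precisely Fejér convergence and, moreover, gives a summable bound on $d^2(q^k,p^k)$.

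**For the second assertion** $\lim_{k\to\infty}d(q^k,p^k)=0$, I would sum the displayed inequality over $k$: telescoping yields $\sum_{k=0}^\infty d^2(q^k,p^k)\leq d^2(p^*,p^0)<\infty$, whence the general term tends to zero. This uses that the right-hand side is bounded below (distances are nonnegative) so the partial sums of $d^2(q^k,p^k)$ are bounded.

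**The main obstacle** I anticipate is the step showing $p^*\in S_k$, which is where the monotonicity of $X$ and the geometry of Hadamard manifolds must be used carefully. The difficulty is that the descent inequality from the line search is stated at the point $y^k$ along the geodesic $\gamma_k$, with the tangent vector $\gamma'_k(\lambda_k)$ pointing along $\exp^{-1}_{p^k}z^k$ transported to $y^k$, whereas membership in $S_k$ requires controlling $\langle v^k,\exp^{-1}_{y^k}p^*\rangle$. Bridging these requires relating $\exp^{-1}_{y^k}p^*$, $\exp^{-1}_{y^k}p^k$ and $\gamma'_k(\lambda_k)$, and invoking monotonicity \eqref{eq2.1} to pass from $\langle P_{y^k p^*}v^k-u^*,\exp^{-1}_{p^*}y^k\rangle\geq 0$ to the desired sign; the inequalities \eqref{eq:coslaw} and \eqref{eq:coslaw2} should supply the needed geometric slack. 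Once $p^*\in S_k$ is secured, the remaining projection estimates are routine applications of \eqref{eq:proj}, \eqref{eq:projnon-expansive} and the cosine law, and the telescoping argument completes the proof.
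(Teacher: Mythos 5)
Your proposal is correct and follows essentially the same route as the paper: fix $p^*\in S^*(X,\Omega)$, use monotonicity of $X$ to show $p^*\in S_k$, then apply the cosine law \eqref{eq:coslaw} together with the projection characterization \eqref{eq:proj} to obtain $d^2(p^*,p^{k+1})\leq d^2(p^*,p^k)-d^2(q^k,p^k)$. The only cosmetic differences are that the paper handles the second projection step $p^{k+1}=P_\Omega(q^k)$ via the cosine law again rather than nonexpansiveness, and deduces $d(q^k,p^k)\to 0$ from convergence of the nonincreasing sequence $\{d^2(p^*,p^k)\}$ rather than by telescoping the sum; both variants are equivalent.
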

\begin{proof}
First let us  show that, for all   $ p^*\in S^*(X, \Omega)$  there holds
\begin{equation}\label{eq.Fejer}
d^2(p^*,p^{k+1})\leq d^2(p^*,p^k)-d^2(q^k,p^k), \qquad k=0,1, \ldots.
\end{equation}
For that, take     $u^*\in X(p^*)$  and fix $k$.  Hence, due the monotonicity of   $X$,   we conclude that
$$
\langle v^k,\exp^{-1}_{y^k}p^*\rangle\leq 0.
$$
Thus, in view of \eqref{S_k}, we obtain that  $p^*\in S_k$. On the other hand, applying \eqref{eq:coslaw} with $p_1=p^*$, $p_2=p_k$ and $p_3=q_k$ we have
\begin{eqnarray*}
d^2(p^*,p^k)\geq d^2(p^*,q^k)+d^2(q^k,p^k)-2\left\langle \exp^{-1}_{q^k}p^*,\exp^{-1}_{q^k}p^k\right\rangle.
\end{eqnarray*}
Since $p^*\in S_k$ and $q^k=P_{S_k}(p^k)$, the last inequality implies that
\begin{eqnarray*}
d^2(p^*,p^k)\geq d^2(p^*,q^k)+d^2(q^k,p^k).
\end{eqnarray*}
Analogously,  applying \eqref{eq:coslaw} with $p_1=p^*$, $p_2=q_k$ and $p_3=p_{k+1}$ and considering that  $p^{k+1}:=P_{\Omega}(q^k)$ and $ p^*\in \Omega$,   we conclude that
\begin{eqnarray*}
d^2(p^*,q^k)\geq d^2(p^*,p^{k+1})+d^2(q^k,p^{k+1}).
\end{eqnarray*}
Combining the two last inequalities,  we obtain
$
d^2(p^*,p^k)\geq d^2(q^k,p^k)+d^2(p^*,p^{k+1})+d^2(q^k,p^{k+1}),
$
which implies  \eqref{eq.Fejer}. In particular,  \eqref{eq.Fejer} implies   that $\{p^k\}$ is F\'ejer convergent to $S^*(X, \Omega)$ and $\{d(p^*,p^k)\}$ is noincresing.  Therefore, owing  $\{d(p^*,p^k)\}$ is   inferiorly limited we conclude that    $\{d(p^*,p^k)\}$ converges and taking into account    \eqref{eq.Fejer} the desired result follows.
\end{proof}
\begin{lemma}\label{convergence}
If the sequence $\{p^k\}$ is infinity then $\lim_{k\rightarrow\infty}\epsilon_k=0$. Moreover,   all   cluster points  of $\{p^k\}$  belong to $S^*(X, \Omega)$.
\end{lemma}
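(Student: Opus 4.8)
The plan is to prove the two assertions in order, extracting $\lim_k\epsilon_k=0$ first and then using it to pin down the cluster points. Note first that by \eqref{def.p_k} the sequence $\{\epsilon_k\}$ is nonincreasing and nonnegative, and is precisely the sequence of Lemma~\ref{lemmaseq.} with $\rho_k=d^2(p^k,z^k)$ and $\theta_0=\epsilon$; hence it converges to some $\bar\epsilon\ge0$, and by that lemma it suffices to show that $0$ is a cluster point of $\{d^2(p^k,z^k)\}$. Before that I would record that every auxiliary sequence is bounded: $\{p^k\}$ is bounded by Proposition~\ref{fejer} (it is Fej\'er convergent by Lemma~\ref{Fejer}); since $\epsilon_k\le\epsilon$, Proposition~\ref{prop.elem.X}(i) gives $u^k\in X^{\epsilon}(p^k)$, so $\{u^k\}$ is bounded because $X^{\epsilon}$ is bounded on bounded sets (Proposition~\ref{prop.elem.ii}); then \eqref{eq:projnon-expansive} bounds $\{z^k\}$, hence $\{y^k\}$, and finally $\{v^k\}\subset X(y^k)$ is bounded because $X$ is bounded on bounded sets, say $\|v^k\|\le V$. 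The key preparatory estimate converts $\lim_k d(q^k,p^k)=0$ (Lemma~\ref{Fejer}) into control of $\lambda_k d^2(p^k,z^k)$: since $q^k\in S_k$, \eqref{S_k} gives $\langle v^k,\exp^{-1}_{y^k}q^k\rangle\le0$; writing $\exp^{-1}_{y^k}p^k=-\lambda_k\gamma_k'(\lambda_k)$ and using \eqref{ik}, \eqref{lambda_k} gives $\langle v^k,\exp^{-1}_{y^k}p^k\rangle\ge(\lambda_k\delta_-/\alpha_+)\,d^2(p^k,z^k)$, while subtracting the two inner products and using the isometry of parallel transport together with the nonpositive-curvature bound $\|\exp^{-1}_{y^k}p^k-\exp^{-1}_{y^k}q^k\|\le d(p^k,q^k)$ (as in the proof of Lemma~\ref{eq:ContExp}(iv)) gives $\langle v^k,\exp^{-1}_{y^k}p^k\rangle\le V\,d(p^k,q^k)$. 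Combining, $\lambda_k d^2(p^k,z^k)\le(\alpha_+V/\delta_-)\,d(p^k,q^k)\to0$.

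Next I would argue by contradiction. Suppose $0$ is not a cluster point of $\{d^2(p^k,z^k)\}$, so $d^2(p^k,z^k)\ge\delta>0$ eventually and, by Lemma~\ref{lemmaseq.}, $\bar\epsilon>0$. The estimate above then forces $\lambda_k\to0$, hence $i(k)\to\infty$. Passing to a subsequence with $p^{k_j}\to\bar p$ and (by boundedness) $z^{k_j}\to\bar z$, $u^{k_j}\to\bar u$, $\alpha_{k_j}\to\bar\alpha\in[\alpha_-,\alpha_+]$, one has $d^2(\bar p,\bar z)\ge\delta>0$. I then exploit the last \emph{rejected} Armijo step: since $i(k_j)-1$ failed in \eqref{ik}, every $\hat v^{k_j}\in X(y^{k_j,i(k_j)-1})$ satisfies $\langle\hat v^{k_j},\gamma_{k_j}'(2\lambda_{k_j})\rangle>-(\delta_-/\alpha_{k_j})d^2(p^{k_j},z^{k_j})$. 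Because $2\lambda_{k_j}\to0$ we get $y^{k_j,i(k_j)-1}\to\bar p$; Lemma~\ref{Parallel transport} yields $\gamma_{k_j}'(2\lambda_{k_j})\to\exp^{-1}_{\bar p}\bar z$, and boundedness together with upper Kuratowski semicontinuity of $X$ lets me take $\hat v^{k_j}\to\bar v\in X(\bar p)$, so that in the limit $\langle\bar v,\exp^{-1}_{\bar p}\bar z\rangle\ge-(\delta_-/\bar\alpha)d^2(\bar p,\bar z)$. The decisive step is to contradict this via lower semicontinuity. Since $\bar v\in X(\bar p)=X^0(\bar p)\subset X^{\bar\epsilon}(\bar p)$ and $\bar\epsilon>0$, Theorem~\ref{Theoremlsc} applies to $X^{\bar\epsilon}$ and produces $w^j\in X^{\bar\epsilon}(p^{k_j})$ with $P_{p^{k_j}\bar p}w^j\to\bar v$; as $\epsilon_{k_j}\ge\bar\epsilon$, Proposition~\ref{prop.elem.X}(i) gives $w^j\in X^{\epsilon_{k_j}}(p^{k_j})$, so \eqref{sel.uk.ii} with $w=w^j$ reads $\langle w^j,\exp^{-1}_{p^{k_j}}z^{k_j}\rangle\le-(\delta_+/\alpha_{k_j})d^2(p^{k_j},z^{k_j})$, and letting $j\to\infty$ (isometry of parallel transport and Lemma~\ref{eq:ContExp}) gives $\langle\bar v,\exp^{-1}_{\bar p}\bar z\rangle\le-(\delta_+/\bar\alpha)d^2(\bar p,\bar z)$. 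With $\delta_-<\delta_+$ and $d^2(\bar p,\bar z)>0$ the two displayed inequalities are incompatible, a contradiction. Hence $\bar\epsilon=0$, i.e.\ $\lim_k\epsilon_k=0$.

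For the second assertion, $\lim_k\epsilon_k=0$ and Lemma~\ref{lemmaseq.} provide a subsequence with $d(p^{k_l},z^{k_l})\to0$. Extracting a further subsequence with $p^{k_l}\to\tilde p$, $u^{k_l}\to\tilde u$ and $\alpha_{k_l}\to\tilde\alpha\in[\alpha_-,\alpha_+]$, Propositions~\ref{prop.elem.ii} and \ref{prop.elem.X} give $\tilde u\in X^0(\tilde p)=X(\tilde p)$, while $z^{k_l}\to\tilde p$ (from $d(p^{k_l},z^{k_l})\to0$) and the continuity of the exponential map and of the projection give, from \eqref{sel.uk.iii}, that $\tilde p=P_{\Omega}(\exp_{\tilde p}(-\tilde\alpha\tilde u))$ with $\tilde\alpha>0$. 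By Lemma~\ref{solution_condition} this means $\tilde p\in S^*(X,\Omega)$. Finally, since $\{p^k\}$ is Fej\'er convergent to $S^*(X,\Omega)$ and has the accumulation point $\tilde p\in S^*(X,\Omega)$, Proposition~\ref{fejer} yields $\lim_k p^k=\tilde p$; thus $\tilde p$ is the unique cluster point of $\{p^k\}$ and it lies in $S^*(X,\Omega)$, which is exactly the claim.

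I expect the main obstacle to be the decisive step in the contradiction argument: the limit of the Armijo test only yields a vector $\bar v\in X(\bar p)$, whereas the selection rule \eqref{sel.uk.ii} controls inner products only for elements of the enlargement $X^{\epsilon_k}(p^k)$; bridging these requires approximating $\bar v$ by admissible $w^j\in X^{\epsilon_{k_j}}(p^{k_j})$, which is precisely what lower semicontinuity of $X^{\bar\epsilon}$ (Theorem~\ref{Theoremlsc}) supplies, and it is essential that $\bar\epsilon>0$ so the theorem applies. The secondary technical point is the geometric estimate $\lambda_k d^2(p^k,z^k)\to0$, where the nonpositive-curvature inequality for $\exp^{-1}$ must be invoked in the correct direction.
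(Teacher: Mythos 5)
Your proof is correct and rests on the same pillars as the paper's own argument: boundedness of all auxiliary sequences, Lemma~\ref{Fejer}, Lemma~\ref{lemmaseq.}, the rejected Armijo step combined with Lemma~\ref{Parallel transport}, and---decisively---the lower semicontinuity of $X^{\bar\epsilon}$ from Theorem~\ref{Theoremlsc}, which transfers the test \eqref{sel.uk.ii} to the limit vector $\bar v\in X(\bar p)$ and is exactly where the paper places the weight too. The organization, however, is genuinely different in three respects. First, where the paper splits into the cases $\bar\lambda>0$ and $\bar\lambda=0$ along a subsequence, you run a single contradiction (``$0$ is not a cluster point of $\{d^2(p^k,z^k)\}$''), under which $\lambda_k\to0$ is forced and only the hard case survives; the paper's easy case ($\bar\lambda>0$) is absorbed automatically. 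Second, you derive the quantitative whole-sequence bound $\lambda_k d^2(p^k,z^k)\le(\alpha_+V/\delta_-)\,d(p^k,q^k)\to0$ by estimating $\langle v^k,\exp^{-1}_{y^k}p^k-\exp^{-1}_{y^k}q^k\rangle$ via the nonexpansiveness of $\exp^{-1}_{y^k}$ in nonpositive curvature, whereas the paper obtains the same conclusion only along a subsequence by taking limits with Lemma~\ref{eq:ContExp}; your version is slightly sharper and more self-contained. Third, for the second assertion the paper argues that an arbitrary cluster point belongs to $S^*(X,\Omega)$, while you exhibit a single cluster point $\tilde p\in S^*(X,\Omega)$ (along the subsequence on which $d(p^{k_l},z^{k_l})\to0$) and then invoke Fej\'er convergence together with Proposition~\ref{fejer} to conclude $p^k\to\tilde p$, so $\tilde p$ is the unique cluster point; this is logically sound and in effect already proves the convergence part of Theorem~\ref{eq:Conv}. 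The only point worth making explicit is that eventually $i(k_j)\ge 1$, so the rejected step actually exists: this follows from $\lambda_{k_j}=2^{-i(k_j)}\beta_{k_j}\to0$ together with $\beta_{k_j}\ge\beta>0$, which you record as $i(k)\to\infty$, so the gap is only expository.
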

\begin{proof} Suppose that the sequence $\{p^k\}$ is infinity, i.e., the algorithm does not stop. Thus,  by the stopping criterion $d(p^k,z^k)>0$,  for all $k$.  Since   \eqref{def.p_k} implies that $\{\epsilon_k\}$ is a  nonincreasing  monotone sequence and  being   nonnegative   it follows that it converges. Set $\bar{\epsilon}:=\lim_{k\to +\infty}\epsilon_k$. We are going to  prove that $\bar{\epsilon}=0$. First of all, note that from  Lemma \ref{Fejer}  the sequence $\{p^k\}$ is Fej\'er convergent to $S^*(X, \Omega) $ and,  due to   {\bf A3},   we have $S^*(X, \Omega) \neq\varnothing$. Hence,  we conclude that $\{p^k\}$ is bounded.  On the other hand,  considering that $\{p^k\}$ is bounded, Proposition~\ref{prop.elem.ii} implies that $X^{\epsilon_0}(\{p^k\})=\cup_{k=0}^{\infty}X^{\epsilon_0} (p^k)$ is  bounded.    Since $\epsilon_k\leq \epsilon_0$,  the  item $i)$ of Proposition~\ref{prop.elem.X} implies that $X^{\epsilon_k}\subset X^{\epsilon_0} $, for all $k$, and  from  \eqref{sel.uk.i} we obtain that  $\{u^k\}$ is  bounded.  Definitions of $\lambda_k$ and $y^k$ in \eqref{lambda_k}  implies that  $y^k$  belongs to the geodesic  joining $ p^k$ to $z^k$ and,  using \eqref{eq:projnon-expansive} and  \eqref{sel.uk.iii},  we have
$$
d(p^k, y^k)\leq d(p^k, z^k)=d\left(P_{\Omega}(p^k), P_{\Omega}\left(\exp_{p^k}(-\alpha_ku^k)\right)\right)\leq d(p^k, \exp_{p^k}(-\alpha_ku^k))=\|\alpha_ku^k\|,
$$
for $k=0, 1, \ldots.$ In view of the  boundedness of the sequences  $\{p^k\}$,  $\{u^k\}$  and  $\{\alpha_k\}$, we obtain  from the last inequalities  that  $\{y^k\}$ and   $\{z^k\}$  are   bounded. Considering that $v^k\in X(y^k)$,  for all k, we apply  Proposition~\ref{prop.elem.ii}  to  conclude that $\{v^k\}$ is bounded.   Note that \eqref{ik}, \eqref{yki}, \eqref{lambda_k} and \eqref{v^k} imply
\begin{eqnarray*}
\left\langle v^k,\gamma'_k(\lambda_k)\right\rangle\leq-\frac{\delta_{-}}{\alpha_k}d^2(p^k,z^k), \qquad k=0, 1, \ldots.
\end{eqnarray*}
Combining \eqref{ik}, \eqref{yki} and \eqref{lambda_k}, it follows  that $\gamma'(\lambda_k)=-\lambda_k^{-1}\exp^{-1}_{y^k}p^k$, for $ k=0, 1, \ldots$. Thus, taking into account that $ 0< \alpha_k< \alpha_+$, last inequality becomes
\begin{equation} \label{eq.conv.i}
\left\langle v^k,\exp^{-1}_{y^k}p^k\right\rangle\geq\frac{\lambda_k\delta_{-}}{\alpha_+}d^2(p^k,z^k),  \qquad k=0, 1, \ldots .
\end{equation}
Since $\{p^{k}\}$, $\{u^{k}\}$, $\{v^k\}$, $\{z^{k}\}$,  $\{y^{k}\}$,  $\{\alpha_{k}\}$, and $\{\lambda_{k}\}$ are bounded, without loss of generality, we can assume that  they have   subsequences $\{p^{k_j}\}$, $\{u^{k_j}\}$, $\{v^{k_j})\}$, $\{z^{k_j}\}$, $\{y^{k_j}\}$, $\{\alpha_{k_j}\}$ and $\{\lambda_{k_j}\}$ converging to   $\bar{p}$, $\bar{u}$, $\bar{v}$, $\bar{z}$, $\bar{y}$,  $\bar{\alpha}$ and $\bar{\lambda}$,  respectively. Note that,  \eqref{S_k} yields
$$
q^{k_j}\in S_{k_j}=\left\{p\in M~:~  \left\langle v^{k_j}, \exp^{-1}_{y^{k_j}}p \right \rangle\leq 0\right\}, \qquad j=0,1, \ldots.
$$
Using Lemma \ref{Fejer} we have $\lim_{j\rightarrow\infty}p^{k_j}=\lim_{j\rightarrow\infty}q^{k_j}=\bar{p}$. Thereby, latter inclusion  together with item (iv) of  Lemma \ref{eq:ContExp} and $\lim_{j\rightarrow\infty}y^{k_j}=\bar{y}$  implies
$$
\lim_{j\rightarrow\infty} \left\langle v^{k_j}, \exp^{-1}_{y^{k_j}}p^{k_j}\right\rangle=\lim_{j\rightarrow\infty}\left\langle v^{k_j}, \exp^{-1}_{y^{k_j}}q^{k_j}\right\rangle\leq0.
$$
Thus  combining the last inequality with   (\ref{eq.conv.i}) it follows  that
\begin{equation} \label{Boa_def.iii}
\lim_{j\rightarrow\infty}\lambda_{k_j}d^2(p^{k_j},z^{k_j})=0.
\end{equation}
Considering that  $\lim_{j\rightarrow\infty}\lambda_{k_j}=\bar \lambda$,  we have two possibilities: either $\bar \lambda>0$ or $\bar \lambda=0$ . First, let us to assume that $\bar \lambda>0$. Since $\lim_{ j\to \infty} p^{k_j}= \bar{p}$ and $\lim_{j \to \infty} z^{k_j}= \bar{z}$, we obtain  from (\ref{Boa_def.iii}) that
$
d(\bar{p},\bar{z})=\lim_{j\to \infty}d(p^{k_j},z^{k_j})=0,
$
and consequently $\bar{p}=\bar{z}$.  Moreover, $0$  is a cluster point of $\{ d^2(p^{k},z^{k})\}$.  Hence, taking into account  \eqref{def.p_k}, we can apply  Lemma~\ref{lemmaseq.} with $\theta_k=\epsilon_k$ and $\rho_k=d^2(p^k,z^k)$  to conclude that $0=\lim_{k\to +\infty}\epsilon_k=\bar{\epsilon}$. Owing to $u^{k_j}\in X^{\epsilon_{k_j}}(p^{k_j})$, combining  Propositions~\ref{prop.elem.X} and \ref{prop.elem.ii}  we conclude that   $\bar{u}\in X(\bar{p})$.
Therefore,  Lemma \ref{solution_condition} implies that $\bar{p}\in S^*(X, \Omega)$.  Now, let us to assume that $\bar \lambda=0$.   In this case, using Lemma~\ref{eq:ContExp} and \eqref{yki} we conclude that $\lim_{ j\to \infty} y^{k_j,i(k_j)-1}=\bar p$. From  Proposition~\ref{prop.elem.ii}  we can take a sequence $\{\xi^j\}$ such that $\xi^j\in X(y^{k_j,i(k_j)-1})$ with  $\lim_{ j\to \infty}\xi^j=\bar{\xi}$ and,  by using  \cite[Proposition~3.5]{LiLopesMartin-Marquez2009},   we conclude that  $\bar{\xi}\in X(\bar{p})$. On the other hand,  \eqref{ik} implies
$$
-\left\langle \xi^j,\gamma'_{k_j}(2^{-i(k_j)+1}\beta_{k_j})\right\rangle<\frac{\delta^{-}}{\alpha_{k_j}}d^2(p^{k_j},z^{k_j}), \quad j=0, 1, \dots.
$$
Considering that  $\gamma'_{k_j}(2^{-i(k_j)+1}\beta_{k_j})= P_{p^{k_j} y^{k_j,i(k_j)-1}}\exp^{-1}_{p^{k_j}}z^{k_j}$ and the parallel transport is an isometry the last inequality  yields
$$
-\left\langle \xi^j,  P_{p^{k_j} y^{k_j,i(k_j)-1}}\exp^{-1}_{p^{k_j}}z^{k_j}\right\rangle<\frac{\delta^{-}}{\alpha_{k_j}}d^2(p^{k_j},z^{k_j}), \quad j=0, 1, \dots.
$$
Taking limits  in the above inequality,  as $j$ goes to infinity, and using item~$(iv)$ of Lemma~\ref{eq:ContExp} together  Lemma~\ref{Parallel transport} we obtain
\begin{equation} \label{eq:inqml1}
-\langle \bar{\xi},\exp^{-1}_{\bar{p}}\bar{z}\rangle\leq\frac{\delta^{-}}{\bar{\alpha}}d^2(\bar{p},\bar{z}).
\end{equation}
Assume by contradiction that $\bar{\epsilon}>0$. First note that Theorem \ref{Theoremlsc} implies that  $X^{\bar{\epsilon}}$ is lower semicontinuous. Therefore,  due to   $\lim _{j\rightarrow\infty}p^{k_j}=\bar{p}$ and $\bar{\xi}\in X(\bar{p})\subset X^{\bar{\epsilon}}(\bar{p})$,   there exists a sequence  $\{P_{p^{k_j}\bar{p}}w^j\}$ with $w^j\in X^{\bar{\epsilon}}(p^{k_j})$  such that  $\lim _{j\rightarrow\infty}P_{p^{k_j}\bar{p}}w^j=\bar{\xi}$. Besides,  \eqref{def.p_k}  implies that   $\bar{\epsilon}\leq\epsilon_{k_j}$ and,  using item i of Proposition~\ref{prop.elem.X},  we conclude that $X^{\bar{\epsilon}}(p^{k_j})\subset X^{\epsilon_{k_j}}(p^{k_j})$, for all $j$. Henceforth,  $w^j\in X^{\epsilon_{k_j}}(p^{k_j})$,  for all $j$, and from  (\ref{sel.uk.ii}) we have
$$
\langle w^j,-\exp^{-1}_{p^{k_j}}z^{k_j}\rangle\geq\frac{\delta^{+}}{\alpha_{k_j}}d^2(p^{k_j},z^{k_j}), \quad j=0, 1, \dots.
$$
Letting  $j$ goes to infinity  in the last inequality and considering Lemma~\ref{eq:ContExp} we obtain that
$$
-\langle \bar{\xi},\exp^{-1}_{\bar{p}}\bar{z}\rangle\geq\frac{\delta^{+}}{\bar{\alpha}}d^2(\bar{p},\bar{z}).
$$
Since $\bar{\alpha}\geq\alpha^->0$ and $0<\delta^-<\delta^+$, combining last inequality with \eqref{eq:inqml1}  we conclude that    $\bar{p}=\bar{z}$. Again, taking into account  \eqref{def.p_k}, we can apply  Lemma~\ref{lemmaseq.} with $\theta_k=\epsilon_k$ and $\rho_k=d^2(p^k,z^k)$  to conclude that $0=\lim_{k\to +\infty}\epsilon_k=\bar{\epsilon}$, which is  a contradiction. Due to $u^{k_j}\in X^{\epsilon_{k_j}}(p^{k_j})$, combining  Propositions \ref{prop.elem.X} and \ref{prop.elem.ii}  we conclude that   $\bar{u}\in X(\bar{p})$. Hence,  Lemma \ref{solution_condition} implies that $\bar{p}\in S^*(X, \Omega)$. Therefore, the proof is concluded.
\end{proof}
\begin{theorem} \label{eq:Conv}
 Either the sequence $\{p^k\}$ is finite and ends at iteration k, in which case $p^k$ is $\epsilon_k$-solution of $\mbox{VIP}(X,\Omega)$, i.e.,
\begin{equation} \label{eq:epssolu}
\sup_{q\in\Omega, v\in X(q)}\left\langle v,\exp^{-1}_q{p^k}\right\rangle \leq   \epsilon_k,
\end{equation}
or it is infinite, in which case it converges to a solution of $\mbox{VIP}(X,\Omega)$.
\end{theorem}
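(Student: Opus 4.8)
The plan is to treat the two alternatives separately: the finite-termination case by a direct computation built on the projection inequality and the defining inequality of the enlargement, and the infinite case by assembling the convergence machinery already established.

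First I would handle the finite case, where the algorithm halts at some iteration $k$ with $p^k=z^k$. By \eqref{sel.uk.iii} we have $z^k=P_{\Omega}(\exp_{p^k}(-\alpha_ku^k))$, so $p^k$ is itself the projection of $\exp_{p^k}(-\alpha_ku^k)$ onto $\Omega$. Substituting this into the projection characterization \eqref{eq:proj} and using $\exp^{-1}_{p^k}(\exp_{p^k}(-\alpha_ku^k))=-\alpha_ku^k$, I obtain, after dividing by $\alpha_k>0$, the key one-sided bound $\langle u^k,\exp^{-1}_{p^k}q\rangle\ge 0$ for every $q\in\Omega$.

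The next step converts this into \eqref{eq:epssolu}. Since $u^k\in X^{\epsilon_k}(p^k)$, the defining inequality \eqref{enl.X} of the $\epsilon$-enlargement gives, for each $q\in\Omega$ and $v\in X(q)$,
\[
\langle v,\exp^{-1}_qp^k\rangle\le\langle P_{p^kq}u^k,\exp^{-1}_qp^k\rangle+\epsilon_k.
\]
To control the first term I would use the parallel-transport identity $P_{p^kq}\exp^{-1}_{p^k}q=-\exp^{-1}_qp^k$ together with the fact that $P_{p^kq}$ is an isometry; this rewrites $\langle P_{p^kq}u^k,\exp^{-1}_qp^k\rangle$ as $-\langle u^k,\exp^{-1}_{p^k}q\rangle$, which is $\le 0$ by the bound obtained in the previous paragraph. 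Hence $\langle v,\exp^{-1}_qp^k\rangle\le\epsilon_k$ for all admissible $q,v$, and taking the supremum over $q\in\Omega$ and $v\in X(q)$ yields exactly \eqref{eq:epssolu}. For the infinite case the work is essentially done: Lemma~\ref{Fejer} asserts that $\{p^k\}$ is Fej\'er convergent to $S^*(X,\Omega)$, which is nonempty by {\bf A3}, so Proposition~\ref{fejer} guarantees that $\{p^k\}$ is bounded and therefore has a cluster point; Lemma~\ref{convergence} shows every cluster point lies in $S^*(X,\Omega)$; and the second assertion of Proposition~\ref{fejer} then forces the whole sequence to converge to that cluster point, i.e.\ to a solution of $\mbox{VIP}(X,\Omega)$.

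I expect the finite case to be the only delicate point, specifically the correct use of the parallel-transport identity relating $\exp^{-1}_{p^k}q$ and $\exp^{-1}_qp^k$ together with the isometry property, which is what makes the enlargement inequality collapse to the clean bound by $\epsilon_k$; one must be careful with the sign in this identity. The infinite case is a routine assembly of the Fej\'er arguments from Lemmas~\ref{Fejer} and~\ref{convergence} and Proposition~\ref{fejer}, requiring no new estimates.
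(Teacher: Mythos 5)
Your proposal is correct and follows essentially the same route as the paper: in the finite case both arguments combine the projection inequality \eqref{eq:proj} at $p^k=z^k$ with the defining inequality of $X^{\epsilon_k}$ and the parallel-transport isometry (you merely extract the bound $\langle u^k,\exp^{-1}_{p^k}q\rangle\ge 0$ first, whereas the paper substitutes the projection inequality into the enlargement inequality afterwards), and the infinite case is the identical assembly of Lemma~\ref{Fejer}, Lemma~\ref{convergence} and Proposition~\ref{fejer}.
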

\begin{proof}
If the algorithm stops at the iteration $k$, then  we have  $p^k=z^k=P_{\Omega}(\exp_{p^k}(-\alpha_ku^k))$. Since  $u^k\in X^{\epsilon_k}(p^k)$,  Definition~\ref{def.enl.X} implies
$$
-\left\langle u^k,\exp^{-1}_{p^k}q\right\rangle-\left\langle v,\exp^{-1}_q{p^k}\right\rangle\geq-\epsilon_k, \qquad q\in\Omega, \quad v\in X(q).
$$
Taking into account  that  $\alpha_k>0$ and $p^k=z^k$,   the last inequality can be written as
$$
\frac{1}{\alpha_k}\left\langle \exp^{-1}_{z^k}[\exp_{p^k}(-\alpha_ku^k)],\exp^{-1}_{z^k}q\right\rangle-\left\langle v,\exp^{-1}_q{p^k}\right\rangle\geq-\epsilon_k, \qquad q\in\Omega, \quad v\in X(q).
$$
In view of \eqref{eq:proj}  and considering that $z^k=P_{\Omega}(\exp_{p^k}(-\alpha_ku^k))$ we conclude  from last inequality that
$$
\left\langle v,\exp^{-1}_q{p^k}\right\rangle\leq\epsilon_k,  \qquad q\in\Omega, \quad v\in X(q),
$$
which implies the desired inequality. Therefore, $p^k$ is an $\epsilon_k$-solution of VIP$(X,\Omega).$ Now,  if $\{p^k\}$ is infinite, then from Lemma \ref{Fejer} the sequence  $\{p^k\}$ is F\'ejer convergent to $S^*(X, \Omega)$. Since we are under the assumption  {\bf A3},  it follows  from Proposition~\ref{fejer} that  $\{p^k\}$ is bounded. Hence,  $\{p^k\}$  has a  cluster point $\bar p$.  Using Lemma~\ref{convergence}  we obtain that  ${\bar p} \in S^*(X, \Omega)$. Therefore, using again  Proposition~\ref{fejer} we conclude that  $\{p^k\}$  converges to ${\bar p} \in S^*(X, \Omega)$ and the theorem is proved.
\end{proof}
\section{Conclusions} \label{sec5}
Theorem~\eqref{eq:Conv} state that, if  the sequence $\{p^k\}$ generated by the  extragradient-type algorithm  is   finite and ends at iteration k, then $p^k$ is $\epsilon_k$-solution of $\mbox{VIP}(X,\Omega)$.  In fact, the concept of approximate solutions of $\mbox{VIP}(X,\Omega)$  can be  related with an important function, namely, the  {\it gap function}   $h:\Omega\rightarrow\mathbb{R}\cup\{+\infty\}$  defined by
\begin{equation}\label{gap}
h(p):=\sup_{q\in\Omega, v\in X(q)}\left\langle v,\exp^{-1}_qp\right\rangle.
\end{equation}
The relation between the function $h$ and the solutions of $\mbox{VIP}(X,\Omega)$ is given in the following lemma, which is a Riemannian version of the \cite[Lemma 4]{BurachikIusem1998}.
\begin{proposition} \label{pr:gfcss}
Let $h$ be the function defined in \eqref{gap}. Then, there holds $h^{-1}(0)=S^*(X, \Omega)$.
\end{proposition}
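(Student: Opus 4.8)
The plan is to prove the two set inclusions $S^*(X,\Omega)\subseteq h^{-1}(0)$ and $h^{-1}(0)\subseteq S^*(X,\Omega)$ separately, after first recording that $h$ is nonnegative on $\Omega$. Indeed, for any $p\in\Omega$ the choice $q=p$ in \eqref{gap} is admissible because $\operatorname{dom}X=M$ guarantees $X(p)\neq\varnothing$, and for any $v\in X(p)$ one has $\langle v,\exp_p^{-1}p\rangle=\langle v,0\rangle=0$; hence $h(p)\geq 0$. Consequently, establishing $h(p)=0$ always reduces to the reverse bound $h(p)\leq 0$, i.e. to showing $\langle v,\exp_q^{-1}p\rangle\leq 0$ for every $q\in\Omega$ and every $v\in X(q)$.

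For the inclusion $S^*(X,\Omega)\subseteq h^{-1}(0)$, I would take $p^*\in S^*(X,\Omega)$ and pick $u^*\in X(p^*)$ with $\langle u^*,\exp_{p^*}^{-1}q\rangle\geq 0$ for all $q\in\Omega$. Fixing $q\in\Omega$ and $v\in X(q)$, the monotonicity inequality \eqref{eq2.1} yields $\langle v,\exp_q^{-1}p^*\rangle\leq\langle P_{p^*q}u^*,\exp_q^{-1}p^*\rangle$. The key geometric identity on a Hadamard manifold is that the parallel transport of $\exp_q^{-1}p^*$ back to $p^*$ equals $-\exp_{p^*}^{-1}q$, since both are velocity vectors of the same minimizing geodesic joining $p^*$ and $q$ read in opposite orientations. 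Since parallel transport is an isometry, this gives $\langle P_{p^*q}u^*,\exp_q^{-1}p^*\rangle=-\langle u^*,\exp_{p^*}^{-1}q\rangle\leq 0$, hence $\langle v,\exp_q^{-1}p^*\rangle\leq 0$. Taking the supremum over $q$ and $v$ gives $h(p^*)\leq 0$, whence $h(p^*)=0$.

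For the converse $h^{-1}(0)\subseteq S^*(X,\Omega)$, I would fix $p\in\Omega$ with $h(p)=0$, so that $\langle v,\exp_q^{-1}p\rangle\leq 0$ for all $q\in\Omega$ and $v\in X(q)$, and aim at the equivalent solution condition $0\in X(p)+N_\Omega(p)$ from \eqref{eq.vip}. By Lemma~\ref{mon.cone} the vector field $X+N_\Omega$ is maximal monotone, so it suffices to verify the implication \eqref{eq2.3} defining maximal monotonicity with the candidate $u=0\in T_pM$: for every $q\in\operatorname{dom}(X+N_\Omega)=\Omega$ and every $w=v+z$ with $v\in X(q)$ and $z\in N_\Omega(q)$, I must check $\langle P_{pq}0-w,\exp_q^{-1}p\rangle\geq 0$, i.e. $\langle w,\exp_q^{-1}p\rangle\leq 0$. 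This splits as $\langle v,\exp_q^{-1}p\rangle+\langle z,\exp_q^{-1}p\rangle$; the first term is nonpositive by $h(p)=0$, and the second is nonpositive because $p\in\Omega$ and $z\in N_\Omega(q)$ by the definition \eqref{eq:nc} of the normal cone. Maximal monotonicity of $X+N_\Omega$ then forces $0\in(X+N_\Omega)(p)$, which is exactly $p\in S^*(X,\Omega)$.

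The routine ingredients are the sign conventions — applying the parallel transport identity $P_{qp^*}\exp_q^{-1}p^*=-\exp_{p^*}^{-1}q$ with the correct orientation — and the bookkeeping $\operatorname{dom}(X+N_\Omega)=\Omega$. The main obstacle, and the only place where the full strength of the hypotheses enters, is the converse inclusion: nonnegativity of $h$ together with mere monotonicity only certifies $p$ as a candidate, and it is precisely the maximal monotonicity of $X+N_\Omega$ (Lemma~\ref{mon.cone}, which itself relies on $\operatorname{dom}X=M$ and the convexity of $\Omega$) that upgrades the variational inequality $\langle w,\exp_q^{-1}p\rangle\leq 0$ against all test pairs into the membership $0\in(X+N_\Omega)(p)$.
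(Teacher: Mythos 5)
Your proof is correct and follows essentially the same route as the paper: monotonicity of $X$ (together with the parallel-transport identity, which the paper leaves implicit) gives $S^*(X,\Omega)\subseteq h^{-1}(0)$, and the maximal monotonicity of $X+N_{\Omega}$ from Lemma~\ref{mon.cone} gives the reverse inclusion. You are in fact slightly more careful than the paper in justifying $h(p)\geq 0$ and in spelling out the maximality test \eqref{eq2.3} with $u=0$.
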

\begin{proof}
We will see first that a zero of $h$ is a solution of $\mbox{VIP}(X,\Omega)$, i.e,  $h^{-1}(0)\subset S^*(X, \Omega)$. Let  $p\in h^{-1}(0)$. Thus,  $h(p)=0$ and the definition of $h$  in \eqref{gap}  implies that
$$
\left\langle v,\exp^{-1}_qp \right\rangle\leq 0, \qquad q\in\Omega, \quad v\in X(q).
$$
On the other hand,  from the definition of the normal cone  $N_{\Omega}$ in \eqref{eq:nc},   we have
$$
\left\langle w,\exp^{-1}_qp \right\rangle\leq 0, \qquad q\in\Omega, \quad  w\in N_{\Omega}(q),
$$
Combining two last inequalities it is  easy to conclude that
$$
\left\langle 0-(v+w),\exp^{-1}_qp \right\rangle\geq 0, \qquad q\in\Omega, \quad v\in X(q), \quad w\in N_{\Omega}(q).
$$
Due to  Lemma~\ref{mon.cone}, the vector field  $X+N_{\Omega}$ is maximal monotone. Then, the maximality property together with latter inequality yields $0\in X(p)+N_{\Omega}(p)$, i.e.,  $p\in S^*(X,\Omega)$. Now, we are going to show that  the   solutions of $\mbox{VIP}(X,\Omega)$ are  zeros of $h$, i.e,  $S^*(X, \Omega)\subset h^{-1}(0)$. Suppose that $p\in S^*(X,\Omega)$. Then,  there exists $u\in X(p)$  such that
$$
\langle u,\exp^{-1}_pq\rangle\geq 0, \qquad q\in\Omega.
$$
Using the last inequality and the monotonicity of the vector field $X$ we obtain
$$
\langle v,\exp^{-1}_qp\rangle\leq 0, \qquad q\in\Omega, \quad v\in X(q).
$$
Therefore, definition of $h$ in \eqref{gap} implies $h(p)\leq 0$ and,   considering that  $h(p)\geq0$, we conclude that  $h(p)=0$, which ends  the proof.
\end{proof}
Let $\epsilon>0$. In view of Proposition~\ref{pr:gfcss} it make sense to define $\epsilon$-solution of $\mbox{VIP}(X,\Omega)$ as all point $\bar{p}\in \Omega$ such that
$$
h(\bar{p})=\sup_{q\in\Omega, v\in X(q)}\left\langle v,\exp^{-1}_q{\bar p}\right\rangle\leq \epsilon.
$$
We remark that if $M$ is a linear space, then the function gap  is convex. However, we do not know if this property is maintained in  Hadamard manifolds.


\section*{Acknowledgements}
The work  was supported by CAPES, FAPEG,  CNPq Grants   458479/2014-4,  471815/2012-8,   312077/2014-9,  305158/2014-7.
\bibliographystyle{habbrv}
\bibliography{Edvaldo}

\def\cprime{$'$}
\begin{thebibliography}{10}

\bibitem{AdlerDedieuShub2002}
R.~L. Adler, J.-P. Dedieu, J.~Y. Margulies, M.~Martens, and M.~Shub.
\newblock Newton's method on {R}iemannian manifolds and a geometric model for
  the human spine.
\newblock {\em IMA J. Numer. Anal.}, 22(3):359--390, 2002.

\bibitem{AhmadiKhatibzadeh2014}
P.~Ahmadi and H.~Khatibzadeh.
\newblock On the convergence of inexact proximal point algorithm on {H}adamard
  manifolds.
\newblock {\em Taiwanese J. Math.}, 18(2):419--433, 2014.

\bibitem{Bacak2013}
M.~Ba{\v{c}}{\'a}k.
\newblock The proximal point algorithm in metric spaces.
\newblock {\em Israel J. Math.}, 194(2):689--701, 2013.

\bibitem{BatistaBentoFerreira2015-2}
E.~E.~A. Batista, G.~C. Bento, and O.~P. Ferreira.
\newblock Enlargement of monotone vector fields and an inexact proximal point
  method for variational inequalities in hadamard manifolds.
\newblock {\em ArXiv e-prints}, 2015.

\bibitem{BatistaBentoFerreira2015}
E.~E.~A. Batista, G.~C. Bento, and O.~P. Ferreira.
\newblock An existence result for the generalized vector equilibrium problem on
  {H}adamard manifolds.
\newblock {\em J. Optim. Theory Appl.}, 167(2):550--557, 2015.

\bibitem{BacakBergmannSteidl2016}
M.~Ba\v~c\'ak, R.~Bergmann, G.~Steidl, and A.~Weinmann.
\newblock A second order nonsmooth variational model for restoring
  manifold-valued images.
\newblock {\em SIAM J. Sci. Comput.}, 38(1):A567--A597, 2016.

\bibitem{BentoFerreira2015}
G.~C. Bento, O.~P. Ferreira, and P.~R. Oliveira.
\newblock Proximal point method for a special class of nonconvex functions on
  {H}adamard manifolds.
\newblock {\em Optimization}, 64(2):289--319, 2015.

\bibitem{BergmannPerschSteidl2016}
R.~Bergmann, J.~Persch, and G.~Steidl.
\newblock A parallel {D}ouglas-{R}achford algorithm for minimizing {ROF}-like
  functionals on images with values in symmetric {H}adamard manifolds.
\newblock {\em SIAM J. Imaging Sci.}, 9(3):901--937, 2016.

\bibitem{BergmannWeinmann2016}
R.~Bergmann and A.~Weinmann.
\newblock A second-order {TV}-type approach for inpainting and denoising higher
  dimensional combined cyclic and vector space data.
\newblock {\em J. Math. Imaging Vision}, 55(3):401--427, 2016.

\bibitem{BergmannWeinmann2016_2}
R.~Bergmann and A.~Weinmann.
\newblock A second-order {TV}-type approach for inpainting and denoising higher
  dimensional combined cyclic and vector space data.
\newblock {\em J. Math. Imaging Vision}, 55(3):401--427, 2016.

\bibitem{BhattacharyaBhattacharya2008}
A.~Bhattacharya and R.~Bhattacharya.
\newblock Statistics on {R}iemannian manifolds: asymptotic distribution and
  curvature.
\newblock {\em Proc. Amer. Math. Soc.}, 136(8):2959--2967, 2008.

\bibitem{BhattacharyaPatrangenaru2003}
R.~Bhattacharya and V.~Patrangenaru.
\newblock Large sample theory of intrinsic and extrinsic sample means on
  manifolds. {I}.
\newblock {\em Ann. Statist.}, 31(1):1--29, 2003.

\bibitem{BhattacharyaPatrangenaru2005}
R.~Bhattacharya and V.~Patrangenaru.
\newblock Large sample theory of intrinsic and extrinsic sample means on
  manifolds. {II}.
\newblock {\em Ann. Statist.}, 33(3):1225--1259, 2005.

\bibitem{GiancarloCastellaniPappalardoPassacantando2013}
G.~Bigi, M.~Castellani, M.~Pappalardo, and M.~Passacantando.
\newblock Existence and solution methods for equilibria.
\newblock {\em European J. Oper. Res.}, 227(1):1--11, 2013.

\bibitem{BurachikIusem1998}
R.~S. Burachik and A.~N. Iusem.
\newblock A generalized proximal point algorithm for the variational inequality
  problem in a {H}ilbert space.
\newblock {\em SIAM J. Optim.}, 8(1):197--216 (electronic), 1998.

\bibitem{BurachikIusem2008}
R.~S. Burachik and A.~N. Iusem.
\newblock {\em Set-valued mappings and enlargements of monotone operators},
  volume~8 of {\em Springer Optimization and Its Applications}.
\newblock Springer, New York, 2008.

\bibitem{BurachikIusemSvaiter1997}
R.~S. Burachik, A.~N. Iusem, and B.~F. Svaiter.
\newblock Enlargement of monotone operators with applications to variational
  inequalities.
\newblock {\em Set-Valued Anal.}, 5(2):159--180, 1997.

\bibitem{ChenHuang2016}
S.-l. Chen and N.-j. Huang.
\newblock Vector variational inequalities and vector optimization problems on
  {H}adamard manifolds.
\newblock {\em Optim. Lett.}, 10(4):753--767, 2016.

\bibitem{CruzNetoSantosSoares2016}
J.~X. Cruz~Neto, P.~S.~M. Santos, and P.~A. Soares, Jr.
\newblock An extragradient method for equilibrium problems on {H}adamard
  manifolds.
\newblock {\em Optim. Lett.}, 10(6):1327--1336, 2016.

\bibitem{NetoFerreiraLucambio2000}
J.~X. da~Cruz~Neto, O.~P. Ferreira, and L.~R. Lucambio~P{\'e}rez.
\newblock Monotone point-to-set vector fields.
\newblock {\em Balkan J. Geom. Appl.}, 5(1):69--79, 2000.
\newblock Dedicated to Professor Constantin Udri{\c{s}}te.

\bibitem{daCruzFerreiraPerez2006}
J.~X. Da~Cruz~Neto, O.~P. Ferreira, L.~R.~L. P{\'e}rez, and S.~Z. N{\'e}meth.
\newblock Convex- and monotone-transformable mathematical programming problems
  and a proximal-like point method.
\newblock {\em J. Global Optim.}, 35(1):53--69, 2006.

\bibitem{DasChakrabortiChaudhuri2001}
P.~Das, N.~R. Chakraborti, and P.~K. Chaudhuri.
\newblock Spherical minimax location problem.
\newblock {\em Comput. Optim. Appl.}, 18(3):311--326, 2001.

\bibitem{BentoCruzNetoOliveira2016}
G.~de~Carvalho~Bento, J.~a.~X. da~Cruz~Neto, and P.~R. Oliveira.
\newblock A new approach to the proximal point method: convergence on general
  {R}iemannian manifolds.
\newblock {\em J. Optim. Theory Appl.}, 168(3):743--755, 2016.

\bibitem{doCarmo1992}
M.~P. do~Carmo.
\newblock {\em Riemannian geometry}.
\newblock Mathematics: Theory \& Applications. Birkh\"auser Boston, Inc.,
  Boston, MA, 1992.
\newblock Translated from the second Portuguese edition by Francis Flaherty.

\bibitem{DreznerWesolowsky1983}
Z.~Drezner and G.~O. Wesolowsky.
\newblock Minimax and maximin facility location problems on a sphere.
\newblock {\em Naval Res. Logist. Quart.}, 30(2):305--312, 1983.

\bibitem{RafaNicolae2016}
R.~Esp\'\i~nola and A.~Nicolae.
\newblock Proximal minimization in {$\rm CAT(\kappa)$} spaces.
\newblock {\em J. Nonlinear Convex Anal.}, 17(11):2329--2338, 2016.

\bibitem{FangChen2015}
C.-j. Fang and S.-l. Chen.
\newblock A projection algorithm for set-valued variational inequalities on
  {H}adamard manifolds.
\newblock {\em Optim. Lett.}, 9(4):779--794, 2015.

\bibitem{FerreiraOliveira2002}
O.~P. Ferreira and P.~R. Oliveira.
\newblock Proximal point algorithm on {R}iemannian manifolds.
\newblock {\em Optimization}, 51(2):257--270, 2002.

\bibitem{FerreiraPerezNemeth2005}
O.~P. Ferreira, L.~R.~L. P{\'e}rez, and S.~Z. N{\'e}meth.
\newblock Singularities of monotone vector fields and an extragradient-type
  algorithm.
\newblock {\em J. Global Optim.}, 31(1):133--151, 2005.

\bibitem{Fletcher2013}
P.~T. Fletcher.
\newblock Geodesic regression and the theory of least squares on {R}iemannian
  manifolds.
\newblock {\em Int. J. Comput. Vis.}, 105(2):171--185, 2013.

\bibitem{Freifeld2012}
O.~Freifeld and M.~J. Black.
\newblock {\em Lie Bodies: A Manifold Representation of 3D Human Shape}.
\newblock in Proceedings of ECCV 2012, Springer Berlin Heidelberg, 2012.

\bibitem{GrohsHosseini2016-2}
P.~Grohs and S.~Hosseini.
\newblock {$\varepsilon$}-subgradient algorithms for locally {L}ipschitz
  functions on {R}iemannian manifolds.
\newblock {\em Adv. Comput. Math.}, 42(2):333--360, 2016.

\bibitem{HaweKleinsteuberDiepold2013}
S.~Hawe, M.~Kleinsteuber, and K.~Diepold.
\newblock Analysis operator learning and its application to image
  reconstruction.
\newblock {\em IEEE Trans. Image Process.}, 22(6):2138--2150, 2013.

\bibitem{IusemPerez2000}
A.~N. Iusem and L.~R.~L. P{\'e}rez.
\newblock An extragradient-type algorithm for non-smooth variational
  inequalities.
\newblock {\em Optimization}, 48(3):309--332, 2000.

\bibitem{kleinsteuber2012}
M.~Kleinsteuber and H.~Shen.
\newblock Blind source separation with compressively sensed linear mixtures.
\newblock {\em IEEE signal processing letters}, 19(2):107--110, 2012.

\bibitem{LiLopesMartin-Marquez2009}
C.~Li, G.~L{\'o}pez, and V.~Mart{\'{\i}}n-M{\'a}rquez.
\newblock Monotone vector fields and the proximal point algorithm on {H}adamard
  manifolds.
\newblock {\em J. Lond. Math. Soc. (2)}, 79(3):663--683, 2009.

\bibitem{LiLopesMartinWang2011}
C.~Li, G.~L{\'o}pez, V.~Mart{\'{\i}}n-M{\'a}rquez, and J.-H. Wang.
\newblock Resolvents of set-valued monotone vector fields in {H}adamard
  manifolds.
\newblock {\em Set-Valued Var. Anal.}, 19(3):361--383, 2011.

\bibitem{LiYao2012}
C.~Li and J.-C. Yao.
\newblock Variational inequalities for set-valued vector fields on {R}iemannian
  manifolds: convexity of the solution set and the proximal point algorithm.
\newblock {\em SIAM J. Control Optim.}, 50(4):2486--2514, 2012.

\bibitem{LiChongLiouYao2009}
S.-L. Li, C.~Li, Y.-C. Liou, and J.-C. Yao.
\newblock Existence of solutions for variational inequalities on {R}iemannian
  manifolds.
\newblock {\em Nonlinear Anal.}, 71(11):5695--5706, 2009.

\bibitem{MonteiroSvaiter2010}
R.~D.~C. Monteiro and B.~F. Svaiter.
\newblock On the complexity of the hybrid proximal extragradient method for the
  iterates and the ergodic mean.
\newblock {\em SIAM J. Optim.}, 20(6):2755--2787, 2010.

\bibitem{Nemeth1999}
S.~Z. N{\'e}meth.
\newblock Monotone vector fields.
\newblock {\em Publ. Math. Debrecen}, 54(3-4):437--449, 1999.

\bibitem{Nemeth2003}
S.~Z. N{\'e}meth.
\newblock Variational inequalities on {H}adamard manifolds.
\newblock {\em Nonlinear Anal.}, 52(5):1491--1498, 2003.

\bibitem{Pennec2006}
X.~Pennec.
\newblock Intrinsic statistics on {R}iemannian manifolds: basic tools for
  geometric measurements.
\newblock {\em J. Math. Imaging Vision}, 25(1):127--154, 2006.

\bibitem{Sakai1996}
T.~Sakai.
\newblock {\em Riemannian geometry}, volume 149 of {\em Translations of
  Mathematical Monographs}.
\newblock American Mathematical Society, Providence, RI, 1996.
\newblock Translated from the 1992 Japanese original by the author.

\bibitem{Smith1994}
S.~T. Smith.
\newblock Optimization techniques on {R}iemannian manifolds.
\newblock In {\em Hamiltonian and gradient flows, algorithms and control},
  volume~3 of {\em Fields Inst. Commun.}, pages 113--136. Amer. Math. Soc.,
  Providence, RI, 1994.

\bibitem{SouzaOliveira2015}
J.~C.~O. Souza and P.~R. Oliveira.
\newblock A proximal point algorithm for {DC} fuctions on {H}adamard manifolds.
\newblock {\em J. Global Optim.}, 63(4):797--810, 2015.

\bibitem{SuparatulatornCholamjiak2017}
R.~Suparatulatorn, P.~Cholamjiak, and S.~Suantai.
\newblock On solving the minimization problem and the fixed-point problem for
  nonexpansive mappings in {CAT}(0) spaces.
\newblock {\em Optim. Methods Softw.}, 32(1):182--192, 2017.

\bibitem{TangHuang2012}
G.-j. Tang and N.-j. Huang.
\newblock Korpelevich's method for variational inequality problems on
  {H}adamard manifolds.
\newblock {\em J. Global Optim.}, 54(3):493--509, 2012.

\bibitem{TangHuang2013}
G.-j. Tang and N.-j. Huang.
\newblock An inexact proximal point algorithm for maximal monotone vector
  fields on {H}adamard manifolds.
\newblock {\em Oper. Res. Lett.}, 41(6):586--591, 2013.

\bibitem{TangWangLiu2015}
G.-j. Tang, X.~Wang, and H.-w. Liu.
\newblock A projection-type method for variational inequalities on {H}adamard
  manifolds and verification of solution existence.
\newblock {\em Optimization}, 64(5):1081--1096, 2015.

\bibitem{TangZhouHuang2013}
G.-j. Tang, L.-w. Zhou, and N.-j. Huang.
\newblock The proximal point algorithm for pseudomonotone variational
  inequalities on {H}adamard manifolds.
\newblock {\em Optim. Lett.}, 7(4):779--790, 2013.

\bibitem{WangLiLopezYao2015}
J.~Wang, C.~Li, G.~Lopez, and J.-C. Yao.
\newblock Convergence analysis of inexact proximal point algorithms on
  {H}adamard manifolds.
\newblock {\em J. Global Optim.}, 61(3):553--573, 2015.

\bibitem{WangLiLopez2016}
J.~Wang, C.~Li, G.~Lopez, and J.-C. Yao.
\newblock Proximal point algorithms on {H}adamard manifolds: linear convergence
  and finite termination.
\newblock {\em SIAM J. Optim.}, 26(4):2696--2729, 2016.

\bibitem{WangLiWangYao2015}
X.~Wang, C.~Li, J.~Wang, and J.-C. Yao.
\newblock Linear convergence of subgradient algorithm for convex feasibility on
  {R}iemannian manifolds.
\newblock {\em SIAM J. Optim.}, 25(4):2334--2358, 2015.

\end{thebibliography}
\end{document}